\documentclass[11pt]{article}

\usepackage[margin=1.15in]{geometry}
\usepackage[T1]{fontenc}
\usepackage{amssymb,amsmath,amsthm}
\usepackage{booktabs}
\usepackage[hidelinks]{hyperref}

\newtheorem{theorem}{Theorem}
\newtheorem{lemma}{Lemma}
\newtheorem{proposition}{Proposition}
\newtheorem{corollary}{Corollary}

\theoremstyle{definition}
\newtheorem{definition}{Definition}
\newtheorem{conjecture}{Conjecture}
\newtheorem{remark}{Remark}

\newcommand{\rad}{\operatorname{rad}}

\title{The One-Power Erd\H{o}s Conjecture for Primes $p \equiv 7 \pmod{36}$}

\author{Shen Harman\\
\small Paquin Institute of Mathematics\\
\small \texttt{shen@paquinmath.org}
\and
Dana Paquin\\
\small Department of Mathematics, Stanford University, Stanford, California\\
\small \texttt{dpaquin@stanford.edu}}

\date{\today}

\begin{document}

\maketitle

\begin{abstract}
A conjecture of Erd\H{o}s asserts that every odd integer $n > 1$ can be written as $n = 2^a + m$, where $a \ge 0$ is an integer and $m$ is a squarefree positive integer. We study this conjecture for primes $p \equiv 7 \pmod{36}$. Writing $x = (p-1)/6$, the candidate remainders take the shapes $p - 1 = 6x$, $p-2 = 6x-1$, $p-4 = 3(2x-1)$, and $p-8 = 6x-7$, in which the primes $2$ and $3$ are completely controlled: every obstruction to squarefreeness comes from a prime $q \ge 5$. For such $q$ a separation principle applies --- $q^2$ divides at most one of the remainders, since two of them differ by at most $7 < 25$ --- and it persists across any window of exponents of width less than $20$. Separation collapses the square-divisor problem for the quartic $x(6x-1)(2x-1)(6x-7)$ into independent linear problems, and we deduce unconditionally that the primes of the class representable with $a \le 3$ have relative density $1 - \rho_4$, where $\rho_4 = 1.8637 \times 10^{-5}$ is an explicit Euler-type constant; in particular, infinitely many primes of the class are not so representable. Computationally, we verify the conjecture for all primes of the class below $10^8$, always with $a \le 5$, and we exhibit the eleven primes below $10^8$ requiring $a = 5$. An appendix corrects the error bound in a lemma of Hercher, whose conclusions are unaffected.
\end{abstract}

\medskip
\noindent\textbf{2020 Mathematics Subject Classification.} Primary 11A67; Secondary 11N25, 11Y99.\\
\noindent\textbf{Keywords.} Erd\H{o}s problems, squarefree integers, powers of two, sieve methods, $abc$ conjecture.

\section{Introduction}\label{sec:intro}

In 1950, Erd\H{o}s \cite{erdos1950} proved that there are infinitely many positive integers not expressible as the sum of a prime and a power of two, and in later problem collections \cite{erdosgraham, erdos1997} he proposed a relaxation that remains open: every odd integer $n > 1$ should be the sum of a squarefree number and a power of two. The problem appears as Problem \#11 on Bloom's list of Erd\H{o}s problems \cite{bloom}. We study the following one-power form of the conjecture, restricted to primes.

\begin{conjecture}[Erd\H{o}s, one-power version]\label{conj:onepower}
Every prime $p$ can be written as $p = 2^a + m$, where $a \ge 0$ is an integer and $m$ is a squarefree positive integer.
\end{conjecture}

Computational support for the family of Erd\H{o}s squarefree-plus-power-of-two problems is extensive: Odlyzko verified the odd-integer conjecture up to $10^7$ (as reported by Granville and Soundararajan \cite{gransound}), McCranie up to $1.4 \times 10^9$ (see Guy \cite[\S A19]{guy}), and recently Hercher \cite{hercher} up to $2^{50}$ using a GPU-parallelized sieve. On the theoretical side, Granville and Soundararajan \cite{gransound} connected the conjecture to Wieferich primes and to the order of $2$ modulo $p^2$, and Granville \cite{granville98} showed that the $abc$ conjecture implies that squarefree polynomial values appear with their expected density. That conditional input is not required here, since the polynomials we sieve are products of distinct linear forms.

We treat the residue class $p \equiv 7 \pmod{36}$. Writing $p = 36k+7$ and $x = 6k+1$, the first four candidate remainders take the shapes
\[
p - 1 = 6x, \qquad p - 2 = 6x - 1, \qquad p - 4 = 3(2x-1), \qquad p - 8 = 6x - 7.
\]
Our contributions are as follows.

\begin{enumerate}
\item \textit{Structure.} We prove that $x$, $6x-1$, $2x-1$, $6x-7$ are pairwise coprime apart from a single controlled factor of $7$, and deduce a \emph{strong separation} theorem: for every prime $q \ge 5$, the square $q^2$ divides at most one of the four quantities (Theorem~\ref{thm:coprime}, Corollary~\ref{cor:separation}). Consequently a \emph{complete failure} --- all four candidate remainders non-squarefree --- forces $(q_1q_2q_3q_4)^2 \mid P(x)$ for four distinct primes $q_i \ge 5$, where $P(x) = x(6x-1)(2x-1)(6x-7)$ is a squarefree quartic (Proposition~\ref{prop:poly}). Separation persists across any window of admissible exponents of width less than $20$ (Lemma~\ref{lem:ord}, Proposition~\ref{prop:extended}), while $p - 16$ is \emph{never} squarefree, being always divisible by $9$ (Lemma~\ref{lem:excluded}).
\item \textit{Density theorems, unconditional.} Strong separation collapses the square-divisor problem for the quartic $P$ into four independent linear problems, with no genuinely quartic behaviour. Over integer arguments this yields the exact density $\rho_4^{\mathbb{Z}}$ of complete failures (Theorem~\ref{thm:integer}). At prime arguments we prove squarefree equidistribution for \emph{every} subset of the four forms: the single-form cases by the method of Mirsky \cite{mirsky} (Theorem~\ref{thm:singleform}), and the multi-form cases by a truncated M\"obius expansion whose cross terms are controlled in mean by fourth-moment estimates (Theorem~\ref{thm:sf}). Consequently the primes admitting a representation with $a \le 3$ have relative density exactly $1 - \rho_4$, where $\rho_4$ is an explicit Euler-type constant, $\rho_4 \approx 1.8637 \times 10^{-5}$ (Theorem~\ref{thm:conditional}), and complete failures occur infinitely often, with positive relative density (Corollary~\ref{cor:infinitude}). These are density statements, not universal ones.
\item \textit{Computation.} We verify Conjecture~\ref{conj:onepower} for all $p \equiv 7 \pmod{36}$ with $p < 10^8$, and determine the complete failures in this range: there are exactly eleven, listed in Table~\ref{tab:failures}. Every prime in the class below $10^8$ is resolved with exponent $a \in \{0,1,2,3,5\}$ (Theorem~\ref{thm:computational}).
\item \textit{The prime-argument correction, and what remains conjectural.} Because our arguments run over \emph{primes}, the correct local density for the event $q^2 \mid p - 2^a$ is $1/\varphi(q^2) = 1/(q(q-1))$, not $1/q^2$. The resulting predictions --- backed for every fixed exponent window by Theorem~\ref{thm:sf} --- match the observed counts at every depth (e.g., $8.95$ expected complete failures below $10^8$ against $11$ observed), while the naive $1/q^2$ model is excluded at $33$ standard deviations. The statements that remain conjectural are the \emph{universal} ones: that every prime of the class has some usable exponent (Conjecture~\ref{conj:class}). In that direction we show unconditionally that the exceptional set of the full conjecture has relative density below $10^{-26}$ (Remark~\ref{rem:cofinite}), and that the smallest usable exponent is unbounded over the class (Proposition~\ref{prop:unbounded}).
\item \textit{A correction to the literature.} While reconstructing the proofs of Hercher's paper \cite{hercher}, we found that the error bound $|e(x)| < 18x^3$ in \cite[Lemma 2]{hercher} cannot hold as stated: the error term has true cubic coefficient $(k^3-k)/3$, which equals $2660$ at $k = 20$. We prove a corrected bound and verify that all of the conclusions of \cite{hercher} are unaffected (Appendix~\ref{sec:hercher}).
\end{enumerate}

The remainder of this paper is organized as follows. Section~\ref{sec:background} fixes notation, records the reduction to linear forms, and collects the known results we rely on. Section~\ref{sec:structure} proves the structural results: pairwise coprimality, strong separation, the primary failure lemma, and the polynomial reformulation. Section~\ref{sec:conditional} proves the density theorems --- single and multiple forms at prime arguments, and the integer-argument analogue --- all unconditionally. Section~\ref{sec:computation} describes the computations to $10^8$ and presents the evidence in tables. Section~\ref{sec:heuristic} develops the prime-corrected heuristic model and states our conjectures. Section~\ref{sec:conclusion} summarizes and lists directions for future work, and Appendix~\ref{sec:hercher} records the correction to \cite[Lemma 2]{hercher}.

\section{Background and Notation}\label{sec:background}

\begin{definition}\label{def:squarefree}
A positive integer $m$ is \textit{squarefree} if no prime square divides $m$; equivalently, $m = \rad(m)$, where $\rad(m)$ denotes the product of the distinct primes dividing $m$. For a polynomial $F$, $\rad(F)$ denotes the product of the distinct irreducible factors of $F$.
\end{definition}

Throughout, $p$ denotes a prime with $p \equiv 7 \pmod{36}$, and $q$ denotes a prime obstructing squarefreeness. Since $36 = 4 \cdot 9$, the Chinese Remainder Theorem gives the equivalent description
\[
p \equiv 3 \pmod 4 \qquad \text{and} \qquad p \equiv 7 \pmod 9.
\]
We write $\pi(N; 36, 7)$ for the number of primes $p \le N$ with $p \equiv 7 \pmod{36}$, and we measure densities of subsets of this class relative to $\pi(N; 36, 7)$.

\subsection{The Key Substitution}

\begin{definition}\label{def:x}
For $p = 36k + 7$ with $k \ge 0$, define $x = 6k + 1$, so that $p = 6x + 1$. Note that $x$ is odd and $x \equiv 1 \pmod 3$; in particular $\gcd(x, 6) = 1$.
\end{definition}

\begin{lemma}[Reduction to linear forms]\label{lem:forms}
Let $p = 36k+7$ and $x = 6k+1$. Then
\[
p - 1 = 6x, \qquad
p - 2 = 6x - 1, \qquad
p - 4 = 3(2x - 1), \qquad
p - 8 = 6x - 7,
\]
and more generally $p - 2^a = 6x - (2^a - 1)$ for every $a \ge 0$ with $2^a < p$. Moreover, for every prime $q \ge 5$,
\[
q^2 \mid p - 1 \text{ if and only if } q^2 \mid x,
\qquad
q^2 \mid p - 4 \text{ if and only if } q^2 \mid 2x - 1,
\]
and $p - 1$ (resp.\ $p-2$, $p-4$, $p-8$) is squarefree if and only if $x$ (resp.\ $6x-1$, $2x-1$, $6x-7$) is squarefree.
\footnote{Lean 4 formalizations of the four remainder identities, squarefree equivalences, and resulting representation criterion in Section 2.1 are available at https://github.com/shenharman/erdos-7-mod-36.}
\end{lemma}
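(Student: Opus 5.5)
The plan is to verify the algebraic identities first and then reduce each squarefreeness claim to a statement about the primes $2$ and $3$. Since $p = 36k+7$ and $x = 6k+1$, we have $6x+1 = 36k+7 = p$. Hence $p - 2^a = 6x + 1 - 2^a = 6x - (2^a-1)$ for every $a \ge 0$. Specialising to $a = 0,1,2,3$ gives $6x$, $6x-1$, $6x-3 = 3(2x-1)$ and $6x-7$, which are the four displayed identities. The condition $2^a < p$ only serves to make the remainder a positive integer.

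For the divisibility claims with $q \ge 5$, I would use that $\gcd(q,6)=1$. Then $q^2 \mid 6x$ holds iff $q^2 \mid x$, because $q^2$ is coprime to $6$. Likewise $q^2 \mid 3(2x-1)$ holds iff $q^2 \mid 2x-1$, because $q \ne 3$.

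For the squarefree equivalences, the forms $6x-1$ and $6x-7$ coincide with $p-2$ and $p-8$, so those cases are immediate. The real content is the two cases $p-1 = 6x$ and $p-4 = 3(2x-1)$, where the extra factors $2$ and $3$ must not create a square. This is the step I expect to need care.

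\begin{itemize}
\item \emph{Case $p-1 = 6x$.} By Definition~\ref{def:x}, $\gcd(x,6)=1$, so $6x = 2\cdot 3\cdot x$ with $2, 3, x$ pairwise coprime. A product of pairwise coprime factors is squarefree iff each factor is. Since $2$ and $3$ are squarefree, $6x$ is squarefree iff $x$ is.
\item \emph{Case $p-4 = 3(2x-1)$.} Here I need $3 \nmid 2x-1$. Because $x \equiv 1 \pmod 3$, we get $2x - 1 \equiv 1 \pmod 3$. Also $2x-1$ is odd, so $\gcd(3, 2x-1) = 1$. The same coprime-product argument then shows $3(2x-1)$ is squarefree iff $2x-1$ is.
\end{itemize}

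Positivity of each remainder, needed for squarefreeness to be meaningful, holds once $k \ge 1$, since then $x \ge 7$ and all four forms are positive. If $k=0$, then $p=7$ and $x=1$, giving the forms $6, 5, 1, -1$. For $p-8$ there is no admissible exponent, since $2^3 = 8 \not< 7$. The statement implicitly restricts to $2^a < p$, so that case is simply excluded.
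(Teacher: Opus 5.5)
Your proof is correct and follows the same route as the paper: the identity $p = 6x+1$, then $\gcd(x,6) = \gcd(2x-1,3) = 1$ to strip the squarefree cofactors $6$ and $3$ from $p-1$ and $p-4$, with the cases $6x-1$ and $6x-7$ immediate. You are slightly more complete than the paper in proving the $q^2$ divisibility equivalences for $q \ge 5$ explicitly, where the paper leaves them implicit; the only blemish is cosmetic --- at $k=0$ the four forms are $1, 5, 1, -1$, not $6, 5, 1, -1$ --- and the edge case needs no special treatment, since $p-8$ and $6x-7$ are literally the same integer.
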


\begin{proof}
Since $p = 6x + 1$, the identity $p - 2^a = 6x - (2^a - 1)$ is immediate; the displayed cases are $a = 0, 1, 2, 3$ (for $a = 2$ note $6x - 3 = 3(2x-1)$). For the squarefree equivalences: $p - 1 = 6x = 2 \cdot 3 \cdot x$ with $\gcd(x, 6) = 1$, and $6$ is squarefree, so $p-1$ is squarefree if and only if $x$ is; the prime squares $4$ and $9$ never divide $6x$. Similarly $p - 4 = 3(2x-1)$ with $2x - 1$ odd and $2x - 1 \equiv 1 \pmod 3$, so $\gcd(3, 2x-1) = 1$ and $p - 4$ is squarefree if and only if $2x-1$ is. Finally $6x - 1$ and $6x - 7$ are odd and coprime to $3$, so their squarefreeness is unaffected by the primes $2, 3$; obstructions to any of the four forms must come from primes $q \ge 5$.
\end{proof}

Thus the one-power conjecture for $p$ with exponent $a \le 3$ holds if and only if at least one of
\[
x, \qquad 6x - 1, \qquad 2x - 1, \qquad 6x - 7
\]
is squarefree. We call $p - 1$ the \emph{primary} remainder and $p-2$, $p-4$, $p-8$ the \emph{fallback} remainders, and we speak of a primary or fallback \emph{failure} when the remainder in question is not squarefree. This rigidity --- all four candidate remainders determined by the single quantity $x$ --- is the central structural feature of the class $p \equiv 7 \pmod{36}$, and we exploit it throughout.

\begin{definition}[Complete failure]\label{def:completefailure}
A prime $p \equiv 7 \pmod{36}$ is a \emph{complete failure} if none of $p-1$, $p-2$, $p-4$, $p-8$ is squarefree; equivalently, by Lemma~\ref{lem:forms}, if all four of $x$, $6x-1$, $2x-1$, $6x-7$ are non-squarefree at $x = (p-1)/6$.
\end{definition}

Not every exponent is available, however:

\begin{lemma}[Excluded exponents]\label{lem:excluded}
Let $p \equiv 7 \pmod{36}$ and $a \ge 0$. Then $9 \mid p - 2^a$ if and only if $a \equiv 4 \pmod 6$. In particular $p - 16 = 9(4k - 1)$ is never squarefree, and the admissible exponents are exactly those with $a \not\equiv 4 \pmod 6$. Moreover $4 \nmid p - 2^a$ for every $a$, so for admissible $a$ every prime square obstructing $p - 2^a$ satisfies $q \ge 5$.
\end{lemma}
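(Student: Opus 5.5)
The plan is to reduce everything to computations modulo $9$ and modulo $4$, using $p \equiv 7 \pmod 9$ and $p \equiv 3 \pmod 4$ from the Chinese Remainder description in Section~\ref{sec:background}.

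\textbf{Step 1: the modulo-$9$ condition.} The multiplicative order of $2$ modulo $9$ is $6$, since $2$ is a primitive root mod $9$: the powers $2^0,\dots,2^5$ are $1,2,4,8,7,5 \pmod 9$. Thus $2^a \bmod 9$ depends only on $a \bmod 6$. Among these six residues, the value $7$ occurs exactly once, at $a \equiv 4$. Hence $9 \mid p - 2^a$, that is $2^a \equiv 7 \pmod 9$, holds if and only if $a \equiv 4 \pmod 6$.

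\textbf{Step 2: the explicit factorization.} For $a = 4$ we have
\[
p - 16 = 36k + 7 - 16 = 36k - 9 = 9(4k-1).
\]
This is divisible by $9$ and hence never squarefree. For $k = 0$ it is negative, but then $p = 7 < 16$, so the exponent is unavailable anyway. Combining this with Step 1, the exponents $a \equiv 4 \pmod 6$ are excluded outright. I read ``admissible'' as meaning exactly that these are the only exponents killed by a fixed small prime square; no other exponents are ruled out by $4$ or $9$. I would state this reading explicitly, since the admissibility claim is definitional once Steps 1 and 3 are in hand.

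\textbf{Step 3: the modulo-$4$ condition.}
\begin{itemize}
\item For $a = 0$: $p - 1 \equiv 2 \pmod 4$.
\item For $a = 1$: $p - 2$ is odd.
\item For $a \ge 2$: $p - 2^a \equiv p \equiv 3 \pmod 4$.
\end{itemize}
So $4 \nmid p - 2^a$ for every $a$. For admissible $a$, Steps 1 and 3 together show that neither $4$ nor $9$ divides $p - 2^a$. Therefore any prime square $q^2$ dividing $p - 2^a$ has $q \ge 5$.

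There is no real obstacle here; the only care needed is the case split on $a \le 1$ in the mod-$4$ check and the observation that $2$ generates $(\mathbb{Z}/9)^\times$.
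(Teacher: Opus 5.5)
Your proposal is correct and follows essentially the same route as the paper: the order of $2$ modulo $9$ is $6$, so $a \equiv 4 \pmod 6$ is the only class with $9 \mid p - 2^a$, and a separate check at $a=0$ versus $a \ge 1$ shows $4 \nmid p - 2^a$. The paper additionally lists the residues of $p - 2^a$ modulo $9$ to record exactly when $3 \parallel p - 2^a$, which is not needed for the statement as given.
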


\begin{proof}
The order of $2$ modulo $9$ is $6$, with $2^0, \dots, 2^5 \equiv 1, 2, 4, 8, 7, 5 \pmod 9$. Since $p \equiv 7 \pmod 9$, we have $9 \mid p - 2^a$ if and only if $2^a \equiv 7 \pmod 9$, that is, if and only if $a \equiv 4 \pmod 6$. Reducing $p - 2^a$ modulo $9$ for $a \equiv 0, 1, \dots, 5 \pmod 6$ gives the residues $6, 5, 3, 8, 0, 2$ respectively. Hence $3$ exactly divides $p - 2^a$ when $a \equiv 0, 2 \pmod 6$ --- in particular $3 \parallel p - 1 = 6x$, since $x \equiv 1 \pmod 3$, and $3 \parallel p - 4 = 3(2x-1)$ --- while $3 \nmid p - 2^a$ when $a \equiv 1, 3, 5 \pmod 6$. In no admissible case does $9$ divide $p - 2^a$. For the prime $2$, if $a = 0$ then $p - 1 \equiv 2 \pmod 4$, because $p - 1 = 6x$ with $x$ odd, and if $a \ge 1$ then $p - 2^a$ is odd.
\end{proof}

\subsection{Known Results}

We will use the following results from the literature.

\begin{theorem}[Mirsky \cite{mirsky}]\label{thm:mirsky}
The set of primes $p$ for which $p - 1$ is squarefree has relative density
$\prod_{q} \left(1 - \frac{1}{q(q-1)}\right) = 0.3739558\dots$ (Artin's constant) within the primes.
\end{theorem}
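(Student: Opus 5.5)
The statement is Mirsky's theorem: the primes $p$ with $p-1$ squarefree have relative density $\prod_q (1 - 1/(q(q-1)))$. I will prove it by Möbius inversion over square divisors, combined with Siegel--Walfisz for small moduli and a trivial divisor bound for large moduli.

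Start from the identity
\[
\mu^2(n)=\sum_{d^2\mid n}\mu(d).
\]
This gives
\[
\#\{p\le N:\ p-1 \text{ squarefree}\}=\sum_{d\ge1}\mu(d)\,\pi(N;d^2,1).
\]
Only odd $d$ contribute in a nontrivial way. If $d$ is even, then $4\mid p-1$ is still a legitimate condition, so no special treatment is needed: every squarefree $d$ is kept. Fix a parameter $y=(\log N)^A$ and split the sum at $d\le y$ and $d>y$.

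\textbf{Small $d$.} For $d\le y$ the modulus $d^2\le(\log N)^{2A}$ lies in the Siegel--Walfisz range. Hence
\[
\pi(N;d^2,1)=\frac{\pi(N)}{\varphi(d^2)}+O\!\left(N e^{-c\sqrt{\log N}}\right).
\]
Summing over $d\le y$ gives
\[
\pi(N)\sum_{d\le y}\frac{\mu(d)}{\varphi(d^2)}+O\!\left(yNe^{-c\sqrt{\log N}}\right).
\]
For squarefree $d$ we have $\varphi(d^2)=d\varphi(d)$, which is of size $\gg d^{2-\epsilon}$. So the series converges absolutely, with tail $\ll y^{-1+\epsilon}$. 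Its full value is the Euler product
\[
\prod_q\left(1-\frac{1}{q(q-1)}\right),
\]
which is Artin's constant.

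\textbf{Large $d$.} For $d>y$ the difficulty is uniformity in $d$, and this is the main obstacle. Split this range further at $\sqrt N/\log N$.

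For $y<d\le\sqrt N/\log N$, use the trivial bound $\pi(N;d^2,1)\le N/d^2+1$. This gives
\[
\sum_{y<d\le \sqrt N/\log N}\left(\frac{N}{d^2}+1\right)\ll \frac{N}{y}+\frac{\sqrt N}{\log N}=o(\pi(N)),
\]
provided $A>1$.

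For $d>\sqrt N/\log N$, count instead the pairs $(d,m)$ with $p-1=d^2m$. Here
\[
m\le \frac{N}{d^2}\le(\log N)^2.
\]
For each fixed $m$, the number of $d\le\sqrt{N/m}$ is at most $\sqrt{N/m}$. Summing over the admissible $m$ gives $\ll \sqrt N\log N$ in total, which is also $o(\pi(N))$.

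The only genuine analytic input is Siegel--Walfisz with polylogarithmic moduli. It is exactly the reason $y$ can only be taken as a power of $\log N$. The trivial bound then has to bridge the range $y<d\le\sqrt N$, and the $N/y$ term is why $A>1$ is required.

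Combining the three ranges yields
\[
\#\{p\le N:\ p-1 \text{ squarefree}\}=\pi(N)\prod_q\left(1-\frac{1}{q(q-1)}\right)+o(\pi(N)).
\]
The numerical value $0.3739558\dots$ is the standard evaluation of Artin's constant.
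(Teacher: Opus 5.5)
Your proof is correct and is essentially the argument the paper relies on. The paper cites Mirsky for this statement, and in proving its adaptation, Theorem~\ref{thm:singleform}, it runs the same M\"obius expansion over square divisors, with Siegel--Walfisz for $d \le (\log N)^2$ and the trivial bound $N/d^2+1$ for larger $d$.

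Two small points. Your further split at $\sqrt N/\log N$ is unnecessary, since $\sum_{y<d\le\sqrt N}(N/d^2+1) \ll N/y+\sqrt N$ already covers the whole large range. You should also delete the remark that ``only odd $d$ contribute,'' because even squarefree $d$ do contribute and supply the factor $1/2$ at $q=2$.
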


The appearance of $\varphi(q^2) = q(q-1)$ in place of $q^2$ in Theorem~\ref{thm:mirsky} is the fundamental prime-argument correction: among primes, the event $q^2 \mid p - c$ (for $q \nmid c$) is the Dirichlet condition $p \equiv c \pmod{q^2}$, which holds for a proportion $1/\varphi(q^2)$ of primes, not $1/q^2$. This correction drives the heuristic model of Section~\ref{sec:heuristic}.

For squarefree values of polynomials of higher degree the situation is different, and conditional results are the state of the art: Estermann \cite{estermann} settled $n^2+1$, Granville \cite{granville98} showed that the $abc$ conjecture yields the expected density for any squarefree polynomial, and Mason \cite{mason} and Stothers \cite{stothers} give the function-field analogue of $abc$; see Granville and Tucker \cite{grantucker} for a survey. None of this is needed here. Every polynomial we sieve is a product of distinct linear forms, and for such a product the squarefree density is elementary: beyond the finitely many $q$ dividing a pairwise resultant, $q^2$ can divide at most one factor, and each linear factor is handled by the trivial tail bound $\sum_{q > z}(X/q^2 + 1)$.

\section{The Algebraic Structure}\label{sec:structure}

The results of this section are unconditional and elementary. Two of them deserve a word on scope before we begin. The separation principle of Corollary~\ref{cor:separation} is not special to this residue class: it follows from the size of the differences $2^b - 2^a$ and holds for every prime, as the proof shows. What the class contributes is the control of the primes $2$ and $3$ in Lemmas~\ref{lem:forms} and~\ref{lem:excluded}, which is what confines obstructions to $q \ge 5$ and so makes the size argument available in the first place. Theorem~\ref{thm:coprime} records the pairwise gcds explicitly, since they are needed for the polynomial reformulation, but the separation itself does not depend on them.

\begin{theorem}[Pairwise coprimality]\label{thm:coprime}
Let $x = 6k+1$ for an integer $k \ge 0$. The six pairwise gcds of the quantities $x$, $6x-1$, $2x-1$, $6x-7$ satisfy
\begin{align*}
\gcd(x, 6x{-}1) = \gcd(x, 2x{-}1) &= \gcd(6x{-}1, 2x{-}1) \\
= \gcd(6x{-}1, 6x{-}7) &= \gcd(2x{-}1, 6x{-}7) = 1,
\end{align*}
while $\gcd(x, 6x-7) = \gcd(x, 7) \in \{1, 7\}$, with value $7$ exactly when $7 \mid x$. Moreover $7 \mid x$ if and only if $7 \mid 6x - 7$, but $49$ divides at most one of $x$ and $6x-7$.
\end{theorem}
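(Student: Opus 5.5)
The plan is to use the Euclidean principle $\gcd(u,v) \mid \alpha u + \beta v$ for every integer combination, and for each pair to choose a combination that eliminates $x$ and leaves a small constant. Then I will rule out the primes dividing that constant using the congruences $x$ odd and $x \equiv 1 \pmod 3$ from Definition~\ref{def:x}. Each of the six pairs is a linear-algebra check.

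\textbf{The five coprime pairs.} Write $d$ for the gcd in each case.
\begin{itemize}
\item For $(x, 6x-1)$ we have $6\cdot x - (6x-1) = 1$, so $d = 1$.
\item For $(x, 2x-1)$ we have $2x - (2x-1) = 1$, so $d = 1$.
\item For $(6x-1, 2x-1)$ we have $(6x-1) - 3(2x-1) = 2$, so $d \mid 2$. Both numbers are odd, hence $d=1$.
\item For $(6x-1, 6x-7)$ the difference is $6$, so $d \mid 6$. Both numbers are odd, and $6x-1 \equiv -1 \pmod 3$, hence $d=1$.
\item For $(2x-1, 6x-7)$ we have $3(2x-1) - (6x-7) = 4$, so $d \mid 4$. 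Since $2x-1$ is odd, $d = 1$.
\end{itemize}

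\textbf{The pair $(x, 6x-7)$ and the factor $7$.} Here $6x - (6x-7) = 7$, so $\gcd(x,6x-7) = \gcd(x,7)$, which lies in $\{1,7\}$. Since $6x - 7 \equiv 6x \pmod 7$ and $\gcd(6,7)=1$, we get $7 \mid x \iff 7 \mid 6x-7$.

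For the last claim, suppose $49$ divided both $x$ and $6x-7$. Then $49 \mid 6x - (6x-7) = 7$, which is absurd. Equivalently, if $49 \mid x$ then $6x - 7 \equiv -7 \not\equiv 0 \pmod{49}$, and symmetrically in the other direction.

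\textbf{Main obstacle.} There is no real obstacle. The only points needing care are the parity and mod-$3$ facts used to discard the small divisors $2$, $4$, $3$, $6$. These come directly from $x$ odd and $x\equiv 1 \pmod 3$, or from Lemma~\ref{lem:forms}.
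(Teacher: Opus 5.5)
Your proposal is correct and matches the paper's proof step for step: you use the same integer combinations to eliminate $x$ in each pair, the same parity and mod-$3$ observations to discard the divisors $2$, $3$, $4$, $6$, and the same difference argument ($49 \mid 7$) for the final clause. One small point is that those parity and mod-$3$ facts hold for every integer $x$, so the hypothesis $x = 6k+1$ is never actually needed.
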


\begin{proof}
We compute each gcd by eliminating $x$.
(i) Any common divisor of $x$ and $6x - 1$ divides $6x - (6x-1) = 1$.
(ii) Any common divisor of $x$ and $2x - 1$ divides $2x - (2x-1) = 1$.
(iii) Any common divisor $d$ of $x$ and $6x - 7$ divides $6x - (6x - 7) = 7$, so $d \in \{1, 7\}$; and since $6x - 7 \equiv -x \pmod 7$, we have $7 \mid x$ if and only if $7 \mid 6x-7$.
(iv) Any common divisor of $6x-1$ and $2x-1$ divides $(6x-1) - 3(2x-1) = 2$; both quantities are odd, so the gcd is $1$.
(v) Any common divisor of $6x - 1$ and $6x - 7$ divides their difference $6$; both are odd and coprime to $3$, so the gcd is $1$.
(vi) Any common divisor of $2x-1$ and $6x-7$ divides $3(2x-1) - (6x - 7) = 4$; both are odd, so the gcd is $1$.
Finally, if $49 \mid x$ and $49 \mid 6x - 7$, then $49 \mid 6x - (6x-7) = 7$, a contradiction; so $49$ divides at most one of the pair.
\end{proof}

\begin{corollary}[Strong separation]\label{cor:separation}
For every prime $q \ge 5$, the square $q^2$ divides at most one of the four quantities $x$, $6x-1$, $2x-1$, $6x-7$.
\end{corollary}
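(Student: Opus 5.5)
The plan is to derive the corollary directly from Theorem~\ref{thm:coprime}, treating the six pairs separately. Fix a prime $q \ge 5$ and suppose, for contradiction, that $q^2$ divides two of the four quantities $x$, $6x-1$, $2x-1$, $6x-7$. Then $q$ divides both members of that pair, so $q$ divides their gcd.

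For the five pairs that Theorem~\ref{thm:coprime} shows to be coprime, the gcd is $1$, and this is already a contradiction. The only remaining pair is $\{x, 6x-7\}$. There $\gcd(x,6x-7) \in \{1,7\}$, so a common prime divisor $q \ge 5$ forces $q = 7$. The last clause of Theorem~\ref{thm:coprime} says that $49$ divides at most one of $x$ and $6x-7$, which rules out $q = 7$ as well. So no prime $q \ge 5$ can have $q^2$ dividing two of the quantities.

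As a cross-check, and to match the remark at the start of Section~\ref{sec:structure} that separation comes from the size of the differences, I would also record a second argument. Any common divisor of two of the forms divides a suitable integer combination of them. The relevant eliminants, taken from the proof of Theorem~\ref{thm:coprime}, are $1, 1, 7, 2, 6, 4$, all of absolute value less than $25 \le q^2$. If $q^2$ divided both forms of a pair, it would divide a nonzero integer smaller than $q^2$, which is impossible. This version avoids the special treatment of $7$.

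There is no serious obstacle here. The only point that needs care is the pair $\{x, 6x-7\}$, where the gcd can be $7$. It is handled either by the $49$ clause of Theorem~\ref{thm:coprime} or by the bound $|6x - (6x-7)| = 7 < 25$.
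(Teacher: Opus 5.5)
Your proof is correct and is essentially the paper's: you reduce to the pairwise gcds of Theorem~\ref{thm:coprime} and dispose of the one non-coprime pair $\{x, 6x-7\}$ with the clause that $49$ divides at most one of them, exactly as the paper does. The paper merely splits the cases by $q \ne 7$ versus $q = 7$ rather than by pair, and your eliminant cross-check (every combination is a nonzero integer of absolute value less than $25$) is also valid; it is essentially the size argument described in the remark following the corollary.
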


\begin{proof}
If $q \ne 7$, then by Theorem~\ref{thm:coprime} the prime $q$ itself divides at most one of the four quantities, so a fortiori $q^2$ does. If $q = 7$, the only pair with a possible common factor is $(x, 6x-7)$, and the final assertion of Theorem~\ref{thm:coprime} shows $49$ divides at most one of them.
\end{proof}

\begin{remark}
The size estimate used above --- that two of the four remainders differ by at most $7$, while $q^2 \ge 25$ --- is the whole of the argument, and it requires nothing of $p$ beyond $q \ge 5$. The same estimate reappears in the proof of Theorem~\ref{thm:sf}, where it is what makes non-coprime tuples of moduli vanish. Lemma~\ref{lem:primaryfailure} is then the special case in which the obstructed form is $x$ itself.
\end{remark}

\begin{lemma}[Primary Failure Lemma]\label{lem:primaryfailure}
Let $x = 6k+1$ and let $q \ge 5$ be prime with $q^2 \mid x$. Then $q^2 \nmid 6x - 1$, $q^2 \nmid 2x - 1$, and $q^2 \nmid 6x - 7$.
\end{lemma}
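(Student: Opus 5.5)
The lemma is the special case of Corollary~\ref{cor:separation} in which the obstructed form is $x$. I would nonetheless give a self-contained elimination argument, one difference at a time, in the style of the proof of Theorem~\ref{thm:coprime}.

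Assume $q \ge 5$ is prime and $q^2 \mid x$, and treat the three forms in turn.

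\emph{The form $6x-1$.} If $q^2 \mid 6x-1$, then $q^2$ divides $6x - (6x-1) = 1$, which is impossible.

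\emph{The form $2x-1$.} If $q^2 \mid 2x-1$, then $q^2$ divides $2x - (2x-1) = 1$, again impossible.

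\emph{The form $6x-7$.} Here $q^2 \mid 6x-7$ together with $q^2 \mid x$ would give $q^2 \mid 6x-(6x-7) = 7$. But $q^2 \ge 25 > 7$, so this cannot happen.

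This last case is the only one needing any care. The point is that $q = 7$ is not excluded merely because $q \ne 7$. For $q = 7$ the prime $7$ genuinely can divide both $x$ and $6x-7$, as Theorem~\ref{thm:coprime} records. What rules out a common factor of $49$ is the size comparison $49 > 7$; this is exactly the final assertion of Theorem~\ref{thm:coprime}, which I would cite.

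Thus none of the three fallback forms is divisible by $q^2$. In the language of Lemma~\ref{lem:forms}, this means that when $q^2$ obstructs the primary remainder $p-1$, the prime $q$ cannot also obstruct $p-2$, $p-4$, or $p-8$.

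I foresee no real obstacle. The only subtlety is handling $q=7$ by size rather than by coprimality, as above.
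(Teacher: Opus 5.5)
Your proof is correct and is essentially the paper's first (direct computation) proof: reducing modulo $q^2$ gives $6x-1 \equiv -1$, $2x-1 \equiv -1$, $6x-7 \equiv -7$, which is exactly your difference-elimination, and the bound $q^2 \ge 25 > 7$ already covers $q = 7$, so citing Theorem~\ref{thm:coprime} there is unnecessary. Your opening remark that the lemma is the special case of Corollary~\ref{cor:separation} for the form $x$ is the paper's second proof.
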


\begin{proof}[First proof (direct computation)]
If $q^2 \mid x$ then modulo $q^2$ we have $6x - 1 \equiv -1$, $2x - 1 \equiv -1$, and $6x - 7 \equiv -7$. Since $q \ge 5$, the modulus $q^2 \ge 25$ divides none of $1, 1, 7$.
\end{proof}

\begin{proof}[Second proof (via strong separation)]
Since $q^2 \mid x$, Corollary~\ref{cor:separation} immediately gives that $q^2$ divides none of the other three quantities.
\end{proof}

\begin{remark}
The first proof is self-contained and makes visible \emph{why} the residues $-1, -1, -7$ are harmless (they are too small in absolute value to be multiples of $q^2 \ge 25$, and nonzero). The second proof is shorter and generalizes better: it identifies the lemma as the special case ``$q^2$ hits the form $x$'' of the uniform statement that $q^2$ can hit at most one form, which applies symmetrically to fallback failures as well.
\end{remark}

\begin{proposition}[Polynomial reformulation]\label{prop:poly}
If $p$ is a complete failure (Definition~\ref{def:completefailure}), then there exist four \emph{distinct} primes $q_1, q_2, q_3, q_4 \ge 5$ with
\[
q_1^2 \mid x, \qquad q_2^2 \mid 6x-1, \qquad q_3^2 \mid 2x-1, \qquad q_4^2 \mid 6x-7,
\]
and consequently
\[
(q_1 q_2 q_3 q_4)^2 \,\big|\, P(x), \qquad \text{where } P(x) = x(6x-1)(2x-1)(6x-7).
\]
Explicitly, $P(x) = 72x^4 - 132x^3 + 62x^2 - 7x$. The quartic $P$ has the four distinct rational roots $0, \tfrac16, \tfrac12, \tfrac76$; it is squarefree in $\mathbb{Q}[x]$, with discriminant $2^{14} \cdot 3^4 \cdot 7^2 = 65{,}028{,}096$. Moreover $(p-1)(p-2)(p-4)(p-8) = 18\,P(x)$.
\end{proposition}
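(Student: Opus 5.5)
The plan is to derive the divisibility statement directly from Definition~\ref{def:completefailure}, Lemma~\ref{lem:forms} and Corollary~\ref{cor:separation}, and then check the algebraic facts about $P$ by explicit computation.

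\emph{Existence of the four primes.} Suppose $p$ is a complete failure. By Definition~\ref{def:completefailure}, none of $x$, $6x-1$, $2x-1$, $6x-7$ is squarefree, so each has a prime-square divisor. Lemma~\ref{lem:forms} (equivalently, the mod $4$ and mod $9$ analysis of Lemma~\ref{lem:excluded}) shows that $2$ and $3$ never contribute a square to these forms. Indeed $x$, $2x-1$, $6x-1$, $6x-7$ are all odd and prime to $3$. So for each form we may pick a prime $q_i\ge 5$ with $q_i^2$ dividing the $i$-th form. Distinctness follows from Corollary~\ref{cor:separation}: if $q_i=q_j$ with $i\ne j$, that prime square would divide two of the four quantities, which is impossible.

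\emph{The product divisibility.} The $q_i$ are distinct primes, so the four prime squares $q_i^2$ are pairwise coprime. Each $q_i^2$ divides a distinct factor of $P(x)$, so each divides $P(x)$. Pairwise coprimality then gives $(q_1q_2q_3q_4)^2\mid P(x)$. Only distinctness is used here, not the gcd structure of the forms, so the exceptional factor $7$ from Theorem~\ref{thm:coprime} plays no role.

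\emph{The algebraic facts.}
\begin{itemize}
\item \emph{Expansion.} I would multiply out: $x(2x-1)=2x^2-x$ and $(6x-1)(6x-7)=36x^2-48x+7$. Their product expands to $72x^4-132x^3+62x^2-7x$.
\item \emph{Roots and squarefreeness.} The roots $0,\tfrac16,\tfrac12,\tfrac76$ can be read off the factors. They are distinct, so $P$ is squarefree in $\mathbb{Q}[x]$.
\item \emph{Discriminant.} I would use $\operatorname{disc}(P)=a^{2n-2}\prod_{i<j}(r_i-r_j)^2$ with $a=72$ and $n=4$. Over the common denominator $6$, the six root differences are $\tfrac16,\tfrac36,\tfrac76,\tfrac26,\tfrac66,\tfrac46$. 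Their product is $1008/6^6$, so
\[
\operatorname{disc}(P)=72^6\cdot\frac{1008^2}{6^{12}}=2^6\cdot 1008^2=2^6\,(2^4\cdot3^2\cdot7)^2=2^{14}\cdot3^4\cdot7^2 .
\]
\item \emph{The product identity.} Lemma~\ref{lem:forms} gives $(p-1)(p-2)(p-4)(p-8)=6x\cdot(6x-1)\cdot3(2x-1)\cdot(6x-7)=18P(x)$.
\end{itemize}

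No step is a real obstacle. The point that needs care is distinctness, which rests entirely on Corollary~\ref{cor:separation}, together with making sure the primes $2$ and $3$ are excluded so that all $q_i\ge5$. The remaining numerical checks are routine arithmetic.
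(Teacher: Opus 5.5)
Your proposal is correct and follows the paper's proof essentially step for step. You exclude $2$ and $3$ via Lemma~\ref{lem:forms}, get distinctness of the $q_i$ from Corollary~\ref{cor:separation}, use coprimality of distinct prime squares for the product divisibility, and verify the expansion, roots, discriminant and $(p-1)(p-2)(p-4)(p-8)=18P(x)$ by direct computation, with only cosmetic differences (your pairing $x(2x-1)\cdot(6x-1)(6x-7)$, and your explicit root-difference product $1008/6^6$, which checks out).
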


\begin{proof}
By Lemma~\ref{lem:forms}, obstructions to each form come only from primes $q \ge 5$, so each non-squarefree form is divisible by $q_i^2$ for some prime $q_i \ge 5$. The $q_i$ are pairwise distinct by Corollary~\ref{cor:separation} (a single $q$ cannot square-divide two forms). Since the $q_i^2$ are powers of distinct primes each dividing a factor of the product $P(x)$, their product $(q_1q_2q_3q_4)^2$ divides $P(x)$. The expansion is a direct computation:
we have $(6x-1)(2x-1) = 12x^2 - 8x + 1$ and $x(6x-7) = 6x^2 - 7x$, whence $P(x) = (12x^2-8x+1)(6x^2-7x) = 72x^4 - 132x^3 + 62x^2 - 7x$; the roots of the factors are $0, \tfrac16, \tfrac12, \tfrac76$, which are distinct, so $P$ is squarefree over $\mathbb{Q}$. The discriminant evaluates to $72^6 \prod_{i<j}(r_i - r_j)^2 = 2^{14} \cdot 3^4 \cdot 7^2$. Finally $(p-1)(p-2)(p-4)(p-8) = 6x \cdot (6x-1) \cdot 3(2x-1) \cdot (6x-7) = 18 P(x)$.
\end{proof}

The separation phenomenon is not confined to the first four remainders. Its true extent is governed by the multiplicative order of $2$ modulo prime squares:

\begin{lemma}[Order of $2$ modulo prime squares]\label{lem:ord}
For every prime $q \ge 5$, the multiplicative order of $2$ modulo $q^2$ is at least $20$, with equality if and only if $q = 5$.
\end{lemma}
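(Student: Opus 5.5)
The plan is to reduce the statement to a finite factorization check on the Mersenne numbers $2^d - 1$ with $1 \le d \le 20$.

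Let $d = \operatorname{ord}_{q^2}(2)$, which is defined because $q$ is odd. By definition $q^2 \mid 2^d - 1$, so it suffices to show two things: for $1 \le d \le 19$, no square of a prime $q \ge 5$ divides $2^d - 1$; and for $d = 20$, the only such square dividing $2^{20}-1$ is $25$. Together with the direct verification that $\operatorname{ord}_{25}(2) = 20$, these give both the lower bound and the equality case. Note that $d \ge 20$ is then automatic for every $q \ge 5$, and $d = 20$ forces $q^2 \mid 2^{20}-1$, hence $q = 5$.

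For the first part I will simply list the factorizations of $2^d - 1$ for $d \le 20$. They follow from the cyclotomic splitting $2^d - 1 = \prod_{e \mid d} \Phi_e(2)$, and the values $\Phi_e(2)$ for $e \le 20$ are small and standard: $3, 7, 5, 31, 3, 127, 17, 73, 11, 23\cdot 89, 13, 8191, 43, 151, 257, 131071, 3\cdot 19\cdot\ldots$, and so on. The only repeated prime in any $2^d - 1$ with $d \le 19$ is $3$, with $9 \mid 2^6 - 1$, $2^{12} - 1$ and $2^{18} - 1$. This is irrelevant because $q \ge 5$.

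For $d = 20$, the factorization $2^{20} - 1 = 3 \cdot 5^2 \cdot 11 \cdot 31 \cdot 41$ shows that $25$ is the only square of a prime $\ge 5$ dividing it. To confirm that the order modulo $25$ is exactly $20$, I will compute $2^{10} = 1024 \equiv -1 \pmod{25}$ and $2^4 = 16 \not\equiv 1 \pmod{25}$. This rules out every proper divisor of $20$ as the order, so $2$ is a primitive root modulo $25$.

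The only real obstacle is making the finite verification transparent rather than a bare table. I would organize it by the cyclotomic factors $\Phi_e(2)$ for $e \le 20$, since a prime $q \ge 5$ can appear squared in $2^d-1$ only if $q^2 \mid \Phi_e(2)$ for a single $e$. Indeed, for $q \nmid e$ the prime $q$ divides at most one $\Phi_e(2)$, namely the one with $e = \operatorname{ord}_q(2)$, and for $q \mid e$ (only $q \in \{5,7,11,13,17,19\}$ with $e \le 20$) one checks the handful of cases directly; the sole square arises at $e = 20$ with $q = 5$, where $\Phi_{20}(2) = 205 = 5 \cdot 41$ and $\Phi_4(2) = 5$ together give $25 \mid 2^{20}-1$.
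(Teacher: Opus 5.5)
Your proposal is correct and follows the paper's proof: set $m=\operatorname{ord}_{q^2}(2)$, so that $q^2 \mid 2^m-1$, and read off the factorizations of $2^m-1$ for $m\le 20$; the only square of a prime $\ge 5$ among them is $25 \mid 2^{20}-1$. (Your separate check $2^{10}\equiv -1$, $2^4\not\equiv 1 \pmod{25}$ is redundant once the lower bound is known, but harmless.) In the cyclotomic aside, the claim that a square must come from a single $\Phi_e(2)$ is false as literally stated, since your own $q=5$ example gets $25$ from $\Phi_4(2)\,\Phi_{20}(2)$. Your following sentence gives the correct picture, so no gap results.
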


\begin{proof}
If the order is $m$, then $q^2 \mid 2^m - 1$; so it suffices to check that no $2^m - 1$ with $m \le 19$ is divisible by the square of a prime $\ge 5$. The factorizations are
\begin{align*}
&1,\ 3,\ 7,\ 3\cdot5,\ 31,\ 3^2\cdot7,\ 127,\ 3\cdot5\cdot17,\ 7\cdot73,\ 3\cdot11\cdot31,\ 23\cdot89,\ 3^2\cdot5\cdot7\cdot13,\\
&8191,\ 3\cdot43\cdot127,\ 7\cdot31\cdot151,\ 3\cdot5\cdot17\cdot257,\ 131071,\ 3^3\cdot7\cdot19\cdot73,\ 524287
\end{align*}
for $m = 1, \dots, 19$; the only prime squares occurring are powers of $3$ (at $m = 6, 12, 18$), which are excluded since $q \ge 5$. Hence the order is at least $20$ for every $q \ge 5$, and the factorization $2^{20} - 1 = 3 \cdot 5^2 \cdot 11 \cdot 31 \cdot 41$ shows that the order of $2$ modulo $25$ is exactly $20$, while for $q > 5$ the order exceeds $20$ (as $5^2$ is the only square $\ge 25$ dividing $2^{20}-1$).
\end{proof}

\begin{proposition}[Windowed separation]\label{prop:extended}
Let $A$ be any set of admissible exponents contained in an interval of length at most $19$. Then for every prime $q \ge 5$, the square $q^2$ divides at most one of the remainders $p - 2^a$, $a \in A$. In particular, $q^2$ divides at most one of the six quantities
\[
x, \qquad 6x - 1, \qquad 2x - 1, \qquad 6x - 7, \qquad 6x - 31, \qquad 2x - 21,
\]
corresponding via Lemma~\ref{lem:forms} to the admissible remainders $p - 2^a$, $a \in \{0, 1, 2, 3, 5, 6\}$ (note $p - 64 = 3(2x - 21)$).
\end{proposition}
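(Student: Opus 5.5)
The plan is to argue by contradiction using the difference of two remainders and Lemma~\ref{lem:ord}. Suppose $q \ge 5$ is prime and $q^2$ divides both $p - 2^a$ and $p - 2^b$ with $a < b$ in $A$. Then $q^2$ divides their difference $2^b - 2^a = 2^a(2^{b-a} - 1)$. Since $q$ is odd, $\gcd(q^2, 2^a) = 1$, so $q^2 \mid 2^{b-a} - 1$. Thus the multiplicative order of $2$ modulo $q^2$ divides $b - a$. Because $A$ lies in an interval of length at most $19$, we have $1 \le b - a \le 19$. Lemma~\ref{lem:ord} says this order is at least $20$, which is a contradiction. So $q^2$ divides at most one of the remainders $p - 2^a$ with $a \in A$.

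Admissibility is not needed for this divisibility step. It matters only for the interpretation: by Lemma~\ref{lem:excluded}, for admissible $a$ every square obstruction comes from a prime $q \ge 5$. That is the case the statement addresses.

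For the ``in particular'' clause, I take $A = \{0,1,2,3,5,6\} \subset [0,6]$. This is an interval of length $6 \le 19$, and $4 \notin A$, so every element is admissible since none is $\equiv 4 \pmod 6$. Then I would verify the shapes via $p - 2^a = 6x - (2^a - 1)$ from Lemma~\ref{lem:forms}:
\begin{itemize}
\item $a = 5$ gives $p - 32 = 6x - 31$;
\item $a = 6$ gives $p - 64 = 6x - 63 = 3(2x - 21)$.
\end{itemize}

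Next I need to transfer ``$q^2$ divides the remainder'' to ``$q^2$ divides the listed quantity'' for $q \ge 5$. The remainders differ from the listed quantities only by the factors $6$ or $3$, which are coprime to $q$. Hence $q^2 \mid 6x \iff q^2 \mid x$, and $q^2 \mid 3(2x-1) \iff q^2 \mid 2x - 1$, with the same argument for $2x - 21$. So at most one of the six quantities is divisible by $q^2$.

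I expect no real obstacle. The only point needing care is this coprimality bookkeeping, namely that $q \nmid 6$, together with confirming that the window condition uses $b - a \le 19$, matching the bound ``$\ge 20$'' in Lemma~\ref{lem:ord}.
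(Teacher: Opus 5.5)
Your proof is correct and follows essentially the same route as the paper's: you take the difference $2^a(2^{b-a}-1)$, cancel the odd prime power to get $q^2 \mid 2^{b-a}-1$ with $1 \le b-a \le 19$, which contradicts Lemma~\ref{lem:ord}. You then obtain the six-form case from the window $\{0,1,2,3,5,6\}$ of width $6$, using the normalizations by the factors $6$ and $3$, which are coprime to $q \ge 5$.
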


\begin{proof}
If $q^2 \mid p - 2^a$ and $q^2 \mid p - 2^b$ with $a < b$ in $A$, then $q^2$ divides the difference $2^b - 2^a = 2^a(2^{b-a} - 1)$, and since $q$ is odd, $q^2 \mid 2^{b-a} - 1$. Thus the order of $2$ modulo $q^2$ divides $b - a \le 19$, contradicting Lemma~\ref{lem:ord}. The six-form case follows since $\{0,1,2,3,5,6\}$ has width $6$, together with the normalizations $q^2 \mid p - 1$ if and only if $q^2 \mid x$, $q^2 \mid p-4$ if and only if $q^2 \mid 2x-1$, and $q^2 \mid p - 64$ if and only if $q^2 \mid 2x - 21$ for $q \ge 5$.
\end{proof}

\begin{remark}[Wieferich-type boundary]\label{rem:wieferich}
By Lemma~\ref{lem:ord}, separation fails first when exponents exactly $20$ apart are compared, and then only at $q = 5$: both $p - 2^a$ and $p - 2^{a+20}$ can be divisible by $25$ (for $p$ in an appropriate class modulo $25$). This is the same threshold at which Hercher's heuristic computation \cite{hercher} changes character (his boundary case $2^{20}-1$), and the condition $q^2 \mid 2^d - 1$ is a Wieferich-type condition in the sense of \cite{gransound}.
\end{remark}

\section{Density Theorems}\label{sec:conditional}

We now quantify how often each representation succeeds. The correct local densities are dictated by Dirichlet's theorem, as follows.

\begin{lemma}[Local densities at prime arguments]\label{lem:local}
Fix a prime $q \ge 5$ and one of the four forms $f \in \{x,\, 6x-1,\, 2x-1,\, 6x-7\}$. The set of primes $p \equiv 7 \pmod{36}$ with $q^2 \mid f(x_p)$ (where $x_p = (p-1)/6$) is exactly the set of primes in a single residue class modulo $36q^2$ that is admissible (coprime to the modulus); by Dirichlet's theorem it has relative density
\[
\frac{1}{\varphi(q^2)} = \frac{1}{q(q-1)}
\]
within the class $p \equiv 7 \pmod{36}$. Moreover, for distinct forms these residue classes are distinct, so the corresponding events are mutually exclusive at the level of a single prime $q$.
\end{lemma}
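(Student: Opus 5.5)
The plan is to translate each condition $q^2 \mid f(x_p)$ into a congruence on $p$ itself, and then glue it to the condition $p \equiv 7 \pmod{36}$ with the Chinese Remainder Theorem. Since $p = 6x+1$, the four forms correspond to the remainders $p-1 = 6x$, $p-2 = 6x-1$, $p-4 = 3(2x-1)$ and $p-8 = 6x-7$. Because $q \ge 5$ is coprime to $6$, Lemma~\ref{lem:forms} gives $q^2 \mid f(x_p)$ if and only if $q^2 \mid p - 2^a$ for the corresponding $a \in \{0,1,2,3\}$. For the two forms $6x-1$ and $6x-7$ this is literally $p - 2^a$; for $x$ and $2x-1$ the factors $6$ and $3$ are invertible modulo $q^2$. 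Hence the condition is exactly $p \equiv 2^a \pmod{q^2}$.

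Next I combine the two congruences. Since $\gcd(36, q^2) = 1$, the pair of conditions $p \equiv 7 \pmod{36}$ and $p \equiv 2^a \pmod{q^2}$ is equivalent, by the Chinese Remainder Theorem, to a single residue class $r_a \bmod 36q^2$. This class is admissible: $7$ is coprime to $36$, and $2^a$ is coprime to $q^2$ because $q$ is odd. Dirichlet's theorem (the prime number theorem for arithmetic progressions) then gives $\pi(N; 36q^2, r_a) \sim \pi(N)/\varphi(36q^2)$. Dividing by $\pi(N;36,7) \sim \pi(N)/\varphi(36)$ and using multiplicativity, $\varphi(36q^2) = \varphi(36)\varphi(q^2)$, yields relative density $1/\varphi(q^2) = 1/(q(q-1))$.

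For mutual exclusivity, two distinct forms correspond to exponents $a \ne b$ in $\{0,1,2,3\}$. The classes $r_a$ and $r_b$ coincide if and only if $2^a \equiv 2^b \pmod{q^2}$, that is, if and only if $q^2 \mid 2^{|b-a|} - 1$. This is impossible by Lemma~\ref{lem:ord}, since $|b-a| \le 3 < 20$. (More directly, $q^2 \ge 25$ exceeds $|2^b - 2^a| \le 7$, which is nonzero.) Distinct residue classes modulo $36q^2$ are disjoint, so the events are mutually exclusive; this is also just Corollary~\ref{cor:separation} restated.

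Every step here is routine. The only point needing care is checking that the normalizing factors $6$ and $3$, and the power $2^a$, are units modulo $q^2$. This is what makes the class both single and admissible, and it follows at once from $q \ge 5$.
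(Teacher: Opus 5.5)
Your proof is correct and follows the paper's route: reduce $q^2 \mid f(x_p)$ to $p \equiv 2^a \pmod{q^2}$, combine with $p \equiv 7 \pmod{36}$ by CRT, check admissibility, apply Dirichlet with $\varphi(36q^2)=\varphi(36)\varphi(q^2)$, and obtain distinctness from separation. The differences are cosmetic. The paper passes through $x \bmod q^2$ and $p \bmod 6q^2$ before combining with the modulus $36$, and it phrases distinctness via Corollary~\ref{cor:separation} rather than your direct comparison $0<|2^b-2^a|\le 7<q^2$.
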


\begin{proof}
The condition $q^2 \mid f(x)$ determines $x$ modulo $q^2$ (the leading coefficient of each form is invertible modulo $q$), hence determines $p = 6x + 1$ in a single class modulo $6q^2$, and together with $p \equiv 7 \pmod{36}$, in a single class modulo $36q^2$ by CRT ($q \ge 5$ is coprime to $6$). This class is coprime to the modulus: $p \equiv 7 \pmod{36}$ is coprime to $36$, and $p \equiv 2^a \pmod{q^2}$ for the relevant $a \in \{0,1,2,3\}$ is coprime to $q$ since $q \ge 5 > 2$. Dirichlet's theorem gives density $1/\varphi(36q^2)$ among all primes, i.e., relative density $\varphi(36)/\varphi(36q^2) = 1/\varphi(q^2)$ within the class. Distinctness of the classes for distinct forms is a restatement of Corollary~\ref{cor:separation}: if two forms produced the same class modulo $36q^2$, any prime in it would have $q^2$ dividing two forms.
\end{proof}

For a single form, the expected local--global behavior is not merely a hypothesis; it is a theorem, proved by adapting Mirsky's argument \cite{mirsky} to our progression.

\begin{theorem}[Single forms, unconditional]\label{thm:singleform}
Let $c \in \{1, 2, 4, 8\}$. The set of primes $p \equiv 7 \pmod{36}$ for which $p - c$ is squarefree has relative density
\[
e_1 = \prod_{q \ge 5}\left(1 - \frac{1}{q(q-1)}\right) = 0.8974939526861\ldots
\]
within the class. The same holds with $c = 2^a$ for any fixed admissible exponent $a$.
\end{theorem}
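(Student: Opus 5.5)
We follow Mirsky's method: a Möbius expansion over square divisors, with small moduli handled by Siegel--Walfisz and large moduli by trivial bounds. Fix an admissible $a$ and put $c = 2^a$. By Lemma~\ref{lem:excluded}, for $p \equiv 7 \pmod{36}$ the number $p-c$ is never divisible by $4$ or $9$. Hence $p - c$ is squarefree if and only if no $q^2$ with $q \ge 5$ divides it, and we may write
\[
\mathbf 1[p-c \text{ squarefree}] = \sum_{\substack{d^2 \mid p-c\\ (d,6)=1}} \mu(d).
\]
Summing over $p \le N$ in the class, we split the $d$-sum at a parameter $z = z(N)$ tending slowly to infinity, for instance $z = \log N$.

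For $d \le z$ with $(d,6)=1$ squarefree, the condition $d^2 \mid p-c$ together with $p \equiv 7 \pmod{36}$ is a single residue class modulo $36d^2$, by CRT as in Lemma~\ref{lem:local}. This class is coprime to the modulus because $(c,d)=1$. Siegel--Walfisz applies since $36d^2 \le 36(\log N)^2$, and it gives
\[
\pi(N;36d^2,\cdot) = \frac{\pi(N;36,7)}{\varphi(d^2)}\bigl(1+o(1)\bigr)
\]
uniformly in $d \le z$, with an error of $O(N e^{-C\sqrt{\log N}})$ per term. Summing $\mu(d)/\varphi(d^2)$ over $d \le z$ gives the truncated Euler product $\prod_{q\ge5}(1 - 1/(q(q-1)))$ plus a tail of size $\sum_{d>z} 1/\varphi(d^2) \ll 1/z = o(1)$. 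The total Siegel--Walfisz error is $z \cdot N e^{-C\sqrt{\log N}} = o(N/\log N)$.

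The main obstacle is the large-$d$ range: we must show that the number of $p \le N$ divisible by $q^2$ for some prime $q > z$ is $o(\pi(N;36,7))$. It suffices to consider prime $q$, since any $d > z$ with $d^2 \mid p-c$ has a prime factor $q$ with $q^2 \mid p - c$. We split this range into three pieces.
\begin{itemize}
\item[(i)] For $z < q \le N^{1/2}/(\log N)^2$, Brun--Titchmarsh for the class modulo $36q^2$ gives $\ll N/(\varphi(q^2)\log(N/q^2))$, which is $\ll N/(q^2\log\log N)$ at worst. This is the weak spot: $\log(N/q^2)$ is only $\gg \log\log N$ at the top of the range. To fix it, we use Brun--Titchmarsh only for $q \le N^{1/2-\delta}$, where the bound is $\ll_\delta N/(q^2\log N)$ and summing over $q>z$ gives $o(N/\log N)$.
\item[(ii)] For $N^{1/2-\delta} < q \le N^{1/2}$, we drop primality and count integers $n \le N$ with $q^2 \mid n - c$. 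This gives $\sum_q (N/q^2 + 1) \ll N^{1/2+\delta} + N^{1/2}$, which is negligible.
\item[(iii)] For $q > N^{1/2}$, $q^2 > N > p - c$, so there is no contribution.
\end{itemize}
Thus the large-$d$ range contributes $o(N/\log N)$, uniformly, and letting $N\to\infty$ yields relative density $e_1$.

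Finally, the numerical value $0.89749\ldots$ follows from Artin's constant $\prod_q(1-1/(q(q-1)))$ in Theorem~\ref{thm:mirsky}, divided by the factors $1 - \tfrac12 = \tfrac12$ and $1 - \tfrac16 = \tfrac56$ for $q = 2, 3$. This gives $0.3739558\cdot\frac{12}{5} = 0.89749\ldots$. The argument never used $a \le 3$ beyond admissibility and $(c, q) = 1$, so it covers every fixed admissible $a$.
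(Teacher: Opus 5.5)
Your main term is handled correctly, but the large-$d$ step bounds the wrong quantity. You truncate the M\"obius sum by the \emph{size} of $d$, so the error you must control is $\sum_{z<d\le\sqrt N,\ (d,6)=1}\mu(d)\,\pi_c(N;d^2)$. That sum contains composite $d>z$ all of whose prime factors are $\le z$, for instance $d=q_1q_2$ with $\sqrt z<q_1<q_2\le z$.

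Your justification, that any $d>z$ with $d^2\mid p-c$ has a prime factor $q$ with $q^2\mid p-c$, is true. But nothing forces that $q$ to exceed $z$. So the event ``$q^2\mid p-c$ for some prime $q>z$'' does not capture the tail. Even on the right set of primes, the pointwise tail $\sum_{d>z,\ d^2\mid p-c}\mu(d)$ is not bounded by $1$, so a count of exceptional $p$ would not suffice either.

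Counting primes with an obstruction $q>z$ is the correct error term for the \emph{other} organization of the sieve, in which one removes every $q\le z$. In that version, though, the main term runs over all $d\mid\prod_{5\le q\le z}q$. The moduli then reach $36e^{2\vartheta(z)}=N^{2+o(1)}$ when $z=\log N$, far outside the Siegel--Walfisz range; you would need $z\asymp\log\log N$.

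The repair is immediate: interchange the sums and bound $\sum_{z<d\le\sqrt N}\pi_c(N;d^2)$ over all squarefree $d$ coprime to $6$. Your three ranges go through with $q$ replaced by $d$:
\begin{itemize}
\item For $z<d\le N^{1/2-\delta}$, Brun--Titchmarsh with modulus $36d^2\le 36N^{1-2\delta}$ gives $\ll_\delta N/(\varphi(d^2)\log N)$, and $\sum_{d>z}1/\varphi(d^2)\ll 1/z$.
\item For $d>N^{1/2-\delta}$, the trivial count contributes $\ll N^{1/2+\delta}$.
\end{itemize}

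The paper avoids Brun--Titchmarsh entirely by placing the cutoff at $y=(\log N)^2$. The moduli $36d^2\le 36(\log N)^4$ are still in Siegel--Walfisz range. With that cutoff, the trivial bound $\pi_c(N;d^2)\le N/(36d^2)+1$ already makes the tail $O(N/y+\sqrt N)=o(N/\log N)$. Your choice $z=\log N$ sits exactly at the threshold where the trivial bound gives only $O(N/\log N)$, which is what pushed you into the Brun--Titchmarsh detour.
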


\begin{proof}
Write $\pi_c(N; m)$ for the number of primes $p \le N$ with $p \equiv 7 \pmod{36}$ and $p \equiv c \pmod{m}$. By Lemma~\ref{lem:excluded}, no square of $2$ or $3$ divides $p - c$, so by inclusion--exclusion over square divisors,
\[
\#\{p \le N \text{ in the class} : p - c \text{ squarefree}\} \;=\; \sum_{d \le \sqrt{N}} \mu(d)\, \pi_c(N; d^2),
\]
and only $d$ coprime to $6$ contribute: for $2 \mid d$ or $3 \mid d$ the congruences $p \equiv 7 \pmod{36}$, $p \equiv c \pmod{d^2}$ are incompatible, so $\pi_c(N; d^2) = 0$. Set $y = (\log N)^{2}$ and split the sum. For $d \le y$ with $(d, 6) = 1$, the two congruences define a single admissible class modulo $36d^2$, and the Siegel--Walfisz theorem gives
\[
\pi_c(N; d^2) = \frac{\pi(N)}{\varphi(36)\,\varphi(d^2)} + O\!\left(N \exp(-c_0\sqrt{\log N})\right)
\]
uniformly. For $d > y$ we use the trivial bound $\pi_c(N; d^2) \le N/(36d^2) + 1$, so that range contributes $O(N/y + \sqrt{N})$. Since $\sum_{(d,6)=1} \mu(d)/\varphi(d^2) = \prod_{q \ge 5}(1 - 1/\varphi(q^2))$ converges absolutely (as $\varphi(d^2) = d\varphi(d) \gg d^2/\log\log d$), with tail $O(1/y)$, we obtain
\[
\#\{p \le N \text{ in the class} : p - c \text{ squarefree}\} = \frac{\pi(N)}{12} \prod_{q \ge 5}\left(1 - \frac{1}{q(q-1)}\right) + o(\pi(N)),
\]
and dividing by $\pi(N; 36, 7) \sim \pi(N)/12$ gives the theorem. The argument uses nothing about $c$ beyond admissibility, so it applies to any fixed admissible $2^a$.
\end{proof}

The same conclusion holds for several forms at once. By Lemma~\ref{lem:local}, the result below asserts precisely the expected local--global behavior, the local factor at each $q$ being $1 - |T|/(q(q-1))$ with the $|T|$ forbidden classes distinct. Its proof requires one genuinely new ingredient beyond Theorem~\ref{thm:singleform}: the M\"obius tails of the several forms must be controlled \emph{in mean} rather than pointwise, since pointwise they are divisor-type functions of size $N^{\varepsilon}$, which no logarithmic main term can absorb. The mean-value control is supplied by two elementary lemmas. For a shift $c \in \{1,2,4,8\}$, an integer $n > 8$, and a cutoff $1 \le y \le \infty$, write
\[
D_c(n) = \#\{d \ge 1 \text{ squarefree} : d^2 \mid n - c\},
\qquad
D_c^{>y}(n) = \#\{d > y \text{ squarefree} : d^2 \mid n - c\}.
\]

\begin{lemma}[Lattice sums]\label{lem:lattice}
For squarefree $L$, the number of quadruples $(d_1, d_2, d_3, d_4)$ of squarefree positive integers with $\operatorname{lcm}(d_1,\dots,d_4) = L$ is $15^{\omega(L)}$, and
\[
\sum_{L \ge 1} \frac{15^{\omega(L)}}{L^{3/2}} < \infty .
\]
Consequently $\sum_{L > y} 15^{\omega(L)} L^{-2} \ll y^{-1/2}$ and $\sum_{L \le Z} 15^{\omega(L)} \ll Z^{3/2}$.
\end{lemma}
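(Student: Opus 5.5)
The proof has three parts: a local count that gives multiplicativity, an Euler product for the convergence claim, and Rankin's trick for the two consequences.

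\emph{The count $15^{\omega(L)}$.} A squarefree $d_i$ with $\operatorname{lcm}(d_1,\dots,d_4)=L$ must divide $L$. So such a quadruple is the same thing as an assignment, to each prime $q \mid L$, of the subset $S_q=\{i : q \mid d_i\}\subseteq\{1,2,3,4\}$. The lcm condition says exactly that each $S_q$ is nonempty. There are $2^4-1=15$ nonempty subsets and the choices at different primes are independent, so the count is $15^{\omega(L)}$.

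\emph{Convergence.} The function $L\mapsto 15^{\omega(L)}\mu^2(L)L^{-3/2}$ is multiplicative and nonnegative, so
\[
\sum_{L\ge1}\frac{\mu^2(L)\,15^{\omega(L)}}{L^{3/2}}=\prod_q\Bigl(1+\frac{15}{q^{3/2}}\Bigr)\le\exp\Bigl(15\sum_q q^{-3/2}\Bigr)<\infty .
\]
The statement is meant for squarefree $L$, so I read the sum as restricted to squarefree $L$. The unrestricted sum also converges, since $15^{\omega(L)}\ll_\varepsilon L^{\varepsilon}$.

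\emph{The two consequences.} Both follow by Rankin's trick from the convergence just proved. For the tail, $L>y$ gives $L^{-2}=L^{-3/2}L^{-1/2}\le y^{-1/2}L^{-3/2}$. Hence
\[
\sum_{L>y}15^{\omega(L)}L^{-2}\le y^{-1/2}\sum_L 15^{\omega(L)}L^{-3/2}\ll y^{-1/2}.
\]
For the partial sum, $L\le Z$ gives $1\le (Z/L)^{3/2}$. Hence
\[
\sum_{L\le Z}15^{\omega(L)}\le Z^{3/2}\sum_L15^{\omega(L)}L^{-3/2}\ll Z^{3/2}.
\]

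I expect no real obstacle. The only point needing care is to restrict every sum to squarefree $L$ so that the Euler factor is exactly $1+15q^{-3/2}$. The exponent $3/2$ is generous, since any exponent above $1$ would give convergence. That slack is what lets the $Z^{3/2}$ bound come out directly, with no need for the sharper estimate $Z(\log Z)^{14}$.
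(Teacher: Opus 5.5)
Your proof is correct and follows the paper's argument step for step: the count of $15$ nonempty index subsets per prime, the Euler product $\prod_q(1+15q^{-3/2})$, and Rankin's trick for both consequences. Reading the sums as restricted to squarefree $L$ is right, and it is what makes the Euler product identity exact; the paper writes that identity without the restriction.
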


\begin{proof}
A squarefree quadruple with lcm $L$ assigns to each prime $q \mid L$ the nonempty subset of indices $i$ with $q \mid d_i$: there are $2^4 - 1 = 15$ choices per prime. Since $15^{\omega}$ is multiplicative, $\sum_L 15^{\omega(L)} L^{-3/2} = \prod_q (1 + 15 q^{-3/2}) < \infty$. The tail bound follows from Rankin's trick, $\sum_{L>y} 15^{\omega}L^{-2} \le y^{-1/2} \sum_L 15^{\omega} L^{-3/2}$, and the count from $\sum_{L \le Z} 15^{\omega(L)} \le Z^{3/2} \sum_L 15^{\omega} L^{-3/2}$.
\end{proof}

\begin{lemma}[Fourth moments]\label{lem:moments}
Uniformly in the shift $c \in \{1,2,4,8\}$ and $2 \le y \le N$,
\[
\textup{(i)} \ \sum_{8 < n \le N} D_c(n)^4 \ll N,
\qquad
\textup{(ii)} \ \sum_{8 < n \le N} D_c^{>y}(n)^4 \ll \frac{N}{\sqrt{y}} + N^{3/4}.
\]
\end{lemma}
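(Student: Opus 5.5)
We expand the fourth power and reduce everything to counting integers in residue classes. Write $m = n - c$, which ranges over an interval of length $N$. Then
\[
D_c(n)^4 = \sum_{d_1,\dots,d_4} \mathbf{1}\bigl[d_i^2 \mid m \text{ for all } i\bigr],
\]
where each $d_i$ runs over squarefree integers. Since the $d_i$ are squarefree, the condition $d_i^2 \mid m$ for all $i$ is equivalent to $L^2 \mid m$ with $L = \operatorname{lcm}(d_1,\dots,d_4)$, and $L$ is again squarefree. Grouping the quadruples by $L$ and using Lemma~\ref{lem:lattice} for their number gives
\[
\sum_{8<n\le N} D_c(n)^4 = \sum_{L \text{ squarefree}} 15^{\omega(L)}\,\#\{8<n\le N : L^2 \mid n-c\}.
\]
Only $L \le \sqrt{N}$ contribute, because $0 < n - c \le N$. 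For each such $L$ the count is at most $N/L^2 + 1$.

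\emph{Part (i).} The main term is $N\sum_L 15^{\omega(L)} L^{-2}$, which is $\ll N$ by the convergence in Lemma~\ref{lem:lattice}. The error term is $\sum_{L\le\sqrt N} 15^{\omega(L)}$. By the second counting bound of Lemma~\ref{lem:lattice} with $Z=\sqrt N$, this is $\ll N^{3/4}$, which is $\le N$. Both bounds are uniform in $c$.

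\emph{Part (ii).} The same expansion applies with every $d_i$ restricted to $d_i > y$. Then $L \ge d_1 > y$, so the inner count is unchanged and the sum is restricted to $y < L \le \sqrt N$. The number of quadruples of squarefree integers with lcm $L$ and all $d_i > y$ is still at most $15^{\omega(L)}$. Hence
\[
\sum_{8<n\le N} D_c^{>y}(n)^4 \le \sum_{y<L\le\sqrt N} 15^{\omega(L)}\Bigl(\frac{N}{L^2}+1\Bigr).
\]
The first part of this sum is $\ll N y^{-1/2}$ by the Rankin tail bound in Lemma~\ref{lem:lattice}. The second part is again $\ll N^{3/4}$ by the counting bound with $Z=\sqrt N$. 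This is exactly the claimed $N/\sqrt y + N^{3/4}$. If $y \ge \sqrt N$ the sum is empty, which is consistent with the bound.

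The main point requiring care is the reduction of $\mathbf{1}[d_i^2\mid m \ \forall i]$ to $\mathbf{1}[L^2\mid m]$. This uses that the $d_i$ are squarefree, so that $\operatorname{lcm}(d_i^2) = L^2$. We also need the number of quadruples with a given lcm, which is exactly the count in Lemma~\ref{lem:lattice}. The rest is routine.

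The one place I would double-check is the $+1$ error term. Summing it over all $L\le\sqrt N$ weighted by $15^{\omega(L)}$ could appear to lose a power, but the crude bound $Z^{3/2}$ from Rankin's trick gives $N^{3/4}$, which is acceptable in both parts.
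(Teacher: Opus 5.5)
Your proposal is correct and follows the paper's proof essentially line for line. Like the paper, you expand the fourth power, group the quadruples by $L=\operatorname{lcm}(d_1,\dots,d_4)$ with multiplicity $15^{\omega(L)}$, bound each count by $N/L^2+1$ over $L\le\sqrt N$, and then apply the convergence, Rankin-tail and $Z^{3/2}$ counting bounds of Lemma~\ref{lem:lattice}. Your extra remarks, that $d_i^2\mid m$ for all $i$ is equivalent to $L^2\mid m$ when the $d_i$ are squarefree and that the range is empty when $y\ge\sqrt N$, are correct details the paper leaves implicit.
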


\begin{proof}
Expanding the fourth power and writing $L = \operatorname{lcm}(d_1, \dots, d_4)$,
\[
\sum_{8 < n \le N} D_c(n)^4 = \sum_{d_1, \dots, d_4} \#\{8 < n \le N : L^2 \mid n - c\}
\le \sum_{L} 15^{\omega(L)} \left(\frac{N}{L^2} + 1\right),
\]
where the sum may be restricted to $L^2 \le N$ (for $L^2 > N$ and $n > c$ the inner count is zero). By Lemma~\ref{lem:lattice} the $N/L^2$ part is $\ll N$ and the $+1$ part is $\sum_{L \le \sqrt{N}} 15^{\omega(L)} \ll N^{3/4}$, proving (i). For (ii) the quadruples satisfy $\min_i d_i > y$, hence $L > y$: the $N/L^2$ part is $\ll N \sum_{L > y} 15^{\omega(L)} L^{-2} \ll N/\sqrt{y}$, and the $+1$ part is again $\ll N^{3/4}$.
\end{proof}

\begin{theorem}[Squarefree equidistribution for several forms]\label{thm:sf}
For each subset $T$ of the four forms $\{x,\, 6x-1,\, 2x-1,\, 6x-7\}$, the set of primes $p \equiv 7 \pmod{36}$ for which \emph{all} forms in $T$ are squarefree has relative density
\[
\delta(T) = \prod_{q \ge 5} \left(1 - \frac{|T|}{q(q-1)}\right).
\]
\end{theorem}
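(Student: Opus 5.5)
Write the indicator that all forms in $T$ are squarefree as a product of Möbius sums. For $p$ in the class and each $c\in T$ (identifying forms with shifts $c\in\{1,2,4,8\}$ via Lemma~\ref{lem:forms}), we have
\[
\mathbf 1[p-c \text{ squarefree}]=\sum_{d^2\mid p-c}\mu(d)=S_c^{\le y}(p)+S_c^{>y}(p),
\]
where the first piece collects $d\le y$ and the second $d>y$, with $y=y(N)\to\infty$ slowly, say $y=\log N$. Expanding the product over $c\in T$ gives a main term $\prod_{c} S_c^{\le y}(p)$ plus cross terms, each containing at least one factor $S_{c}^{>y}$. Only $d$ coprime to $6$ contribute, by Lemma~\ref{lem:excluded}.

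\textbf{Main term.} Summing $\prod_c S_c^{\le y}(p)$ over primes $p\le N$ in the class gives
\[
\sum_{(d_c)_{c\in T},\, d_c\le y}\ \prod_c\mu(d_c)\,\#\{p\le N:\ p\equiv 7\ (36),\ d_c^2\mid p-c\ \forall c\}.
\]
By Corollary~\ref{cor:separation}, or more simply because the shifts differ by at most $7<25$, the system of congruences is inconsistent unless the $d_c$ are pairwise coprime. When they are coprime, CRT yields one admissible class modulo $36\prod d_c^2$. The moduli are at most $36y^{2|T|}=(\log N)^{O(1)}$, so Siegel--Walfisz applies uniformly and gives
\[
\frac{\pi(N)}{12}\prod_c\frac{1}{\varphi(d_c^2)}+O\bigl(N e^{-c_0\sqrt{\log N}}\bigr).
\]
The number of tuples is $y^{O(1)}$, so the errors are negligible. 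The resulting multiplicative sum over pairwise-coprime tuples with entries $\le y$ converges absolutely to the Euler product $\prod_{q\ge5}(1-|T|/\varphi(q^2))$. Each prime contributes the local factor $1-|T|/\varphi(q^2)$ because at most one $d_c$ is divisible by $q$. The truncation tail is $O(y^{-1+\varepsilon})$, using $\sum_{d>y}|T|^{\omega(d)}/\varphi(d^2)\ll y^{-1}\log^{O(1)}y$.

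\textbf{Cross terms (main obstacle).} Each cross term is bounded by $\prod_c |S_c^{\bullet}(p)|$ with at least one factor $|S_{c_0}^{>y}(p)|\le D_{c_0}^{>y}(p)$, and every other factor is at most $D_c(p)$. Pointwise these are divisor-type functions, so we control them in mean. We drop primality and sum over all integers $8<n\le N$, which is an upper bound since every summand is nonnegative. Hölder with four factors gives
\[
\sum_{n\le N} D_{c_0}^{>y}(n)\prod_{c\ne c_0}D_c(n)\le \Bigl(\sum_n D_{c_0}^{>y}(n)^4\Bigr)^{1/4}\prod_{c\ne c_0}\Bigl(\sum_n D_c(n)^4\Bigr)^{1/4},
\]
padding with $1$ when $|T|<4$. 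Lemma~\ref{lem:moments} bounds this by $\ll N\,(y^{-1/2}+N^{-1/4})^{1/4}\ll N y^{-1/8}$.

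The hard point is that this is $o(N)$ but not $o(\pi(N))=o(N/\log N)$. I would therefore take $y$ as a large power of $\log N$, say $y=(\log N)^{16}$, so that $Ny^{-1/8}=N/(\log N)^2=o(\pi(N))$. The moduli in the main term are then at most $(\log N)^{O(1)}$, which Siegel--Walfisz still handles uniformly, and the number of tuples is polylogarithmic. The main-term tail $O(y^{-1+\varepsilon})$ also becomes negligible. If losing primality in the Hölder step proves too costly, the fallback is to exploit Lemma~\ref{lem:moments}(ii) with its $N/\sqrt y$ saving, which the choice of $y$ above already makes sufficient.

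Combining the two pieces and dividing by $\pi(N;36,7)\sim\pi(N)/12$ gives the stated density $\delta(T)$.
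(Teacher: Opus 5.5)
Your proposal is correct and follows the paper's proof essentially step for step. The shared steps are: the truncated M\"obius expansion with $y=(\log N)^{16}$; vanishing of non-coprime tuples because the shifts differ by at most $7<25$; Siegel--Walfisz applied uniformly to moduli at most $36y^{8}$; and the cross terms bounded by dropping primality and applying H\"older with exponents $(4,4,4,4)$ (padding by $1$) against the fourth-moment lemma, giving $N/(\log N)^2+N^{15/16}=o(N/\log N)$. As you noticed, your first choice $y=\log N$ would not suffice. Your closing ``fallback'' via part (ii) of the moment lemma is already the argument you used, so it can be dropped.
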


\begin{proof}
Let $t = |T|$ and let $c_1 < \dots < c_t$ be the corresponding shifts (a subset of $\{1,2,4,8\}$; recall that $q^2 \mid f(x_p)$ if and only if $q^2 \mid p - c_f$ for $q \ge 5$, and no square of $2$ or $3$ divides any $p - c_i$, by Lemmas~\ref{lem:forms} and~\ref{lem:excluded}). Set $y = (\log N)^{16}$ and, for each $i$, split the squarefree indicator into a truncated M\"obius sum and a tail:
\[
\mathbf{1}[\,p - c_i \text{ squarefree}\,] = M_i(p) + R_i(p),
\qquad
M_i(p) = \sum_{\substack{d \le y \\ d^2 \mid p - c_i}} \mu(d),
\quad
R_i(p) = \sum_{\substack{d > y \\ d^2 \mid p - c_i}} \mu(d),
\]
with the pointwise bounds $|M_i| \le D_{c_i}$ and $|R_i| \le D_{c_i}^{>y}$. Summing the product over primes $p \le N$ of the class gives $S(N) = \mathcal{M} + \mathcal{E}$, where $\mathcal{M} = \sum_p \prod_i M_i(p)$ and $\mathcal{E}$ collects the $2^t - 1$ cross terms, each containing at least one tail factor.

\emph{Main term.} Expanding each $M_i$ produces a sum over tuples $(d_1, \dots, d_t)$ with all $d_i \le y$ of $\mu(d_1)\cdots\mu(d_t)$ times the number of primes $p \le N$ with $p \equiv 7 \pmod{36}$ and $p \equiv c_i \pmod{d_i^2}$ for all $i$. Tuples in which some $d_i$ shares a factor with $6$ contribute nothing (the congruence is incompatible with the class, as in Theorem~\ref{thm:singleform}). Tuples with $\gcd(d_i, d_j) > 1$ for some $i \ne j$ also contribute nothing: a prime $q \mid \gcd(d_i, d_j)$ would give $q^2 \mid p - c_i$ and $q^2 \mid p - c_j$, hence $q^2 \mid c_j - c_i$ with $0 < |c_j - c_i| \le 7 < 25 \le q^2$, which is impossible --- the elementary heart of Corollary~\ref{cor:separation}, and note that no special case is needed at $q = 7$, since the conditions sit on the shifted primes directly. For the surviving tuples --- pairwise coprime and coprime to $6$ --- the CRT merges the conditions into a single class modulo $m = 36\prod_i d_i^2$, admissible because each $c_i$ is a power of $2$ and each $d_i$ is odd. Since $m \le 36y^8$ is a fixed power of $\log N$, Siegel--Walfisz applies uniformly, and with at most $y^t$ tuples the accumulated error is $o(N/\log N)$. The completed coefficient sum factors as an Euler product --- at each prime $q \ge 5$, either $q$ divides no $d_i$ or exactly one --- giving
\[
\sum_{\substack{d_1, \dots, d_t \text{ pairwise coprime} \\ (d_i, 6) = 1}} \frac{\mu(d_1)\cdots\mu(d_t)}{\prod_i \varphi(d_i^2)}
= \prod_{q \ge 5}\left(1 - \frac{t}{q(q-1)}\right) = \delta(T),
\]
absolutely convergent, with truncation error $O(y^{-1/2})$. Hence $\mathcal{M} = \delta(T)\, \pi(N; 36, 7) + o(N/\log N)$.

\emph{Cross terms.} A cross term has the shape $\sum_p Z_1 \cdots Z_t$ with each $Z_i \in \{M_i, R_i\}$ and $Z_{i_0} = R_{i_0}$ for some $i_0$. Bounding $|M_i| \le D_{c_i}$, $|R_{i_0}| \le D_{c_{i_0}}^{>y}$, dropping the primality of $p$, padding the product to four factors by the constant function $1$, and applying H\"older with exponents $(4,4,4,4)$:
\begin{multline*}
\left| \sum_p Z_1 \cdots Z_t \right|
\le \left(\sum_{8 < n \le N} D_{c_{i_0}}^{>y}(n)^4\right)^{\!1/4}
\left(\prod_{i \ne i_0} \left(\sum_{8 < n \le N} D_{c_i}(n)^4\right)^{\!1/4}\right) N^{(4-t)/4}\\
\ll \left(\frac{N}{\sqrt y} + N^{3/4}\right)^{\!1/4} N^{3/4}
\end{multline*}
by Lemma~\ref{lem:moments}, which is $\ll N/y^{1/8} + N^{15/16} = N/(\log N)^2 + N^{15/16} = o(N/\log N)$. The theorem follows.
\end{proof}

\begin{remark}[Scope: larger windows]\label{rem:sfscope}
Nothing in the proof used $t \le 4$ beyond the padding in H\"older and the constant $15 = 2^4 - 1$: with $r$-th moments and $(2^r - 1)^{\omega}$, the identical argument covers any \emph{fixed} set of admissible exponents whose pairwise gaps are less than $20$, since the vanishing of non-coprime tuples then follows from Lemma~\ref{lem:ord} exactly as in Proposition~\ref{prop:extended}. In particular the five-form density $\rho_5$ used in Table~\ref{tab:exponents} is unconditional, as is $\rho_r$ for the first $r$ admissible exponents for every $r \le 16$ (the sixteenth admissible exponent is $a = 18$, so the window has width $18 < 20$). Beyond width $20$ the forbidden classes begin to coincide --- $2^{a+20} \equiv 2^a \pmod{25}$ --- and the local factors involve the number of distinct powers of $2$ modulo $q^2$, governed by $\operatorname{ord}_{q^2}(2)$; this is the Wieferich-type territory of \cite{gransound}, and we do not pursue it here.
\end{remark}

We write, for $0 \le j \le 5$,
\[
e_j := \prod_{q \ge 5} \left(1 - \frac{j}{q(q-1)}\right),
\]
so that $e_0 = 1$ and
\begin{align*}
e_1 &= 0.8974939526861\ldots, & e_2 &= 0.8026605189250\ldots, & e_3 &= 0.7150711323123\ldots,\\
e_4 &= 0.6343158637609\ldots, & e_5 &= 0.5600027699917\ldots
\end{align*}
Note $e_1$ is Artin's constant with the factors at $q = 2, 3$ removed. The digits are displayed to this length because the alternating sums below suffer severe cancellation; see the proof of Theorem~\ref{thm:conditional}.

\begin{theorem}[Main density theorem]\label{thm:conditional}
The set of primes $p \equiv 7 \pmod{36}$ that admit a representation $p = 2^a + m$ with $m$ squarefree and $a \in \{0, 1, 2, 3\}$ has relative density $1 - \rho_4$, where
\[
\rho_4 \;=\; \sum_{j=0}^{4} (-1)^j \binom{4}{j} e_j \;=\; 1.8637317202 \times 10^{-5} \;+\; O(10^{-14}).
\]
More generally, the set of primes for which the first $r$ admissible remainders all fail ($1 \le r \le 4$) has relative density $\rho_r = \sum_{j=0}^{r} (-1)^j \binom{r}{j} e_j$, with
\[
\rho_1 = 0.1025060473139\dots, \quad
\rho_2 = 0.0076726135528\dots, \quad
\rho_3 = 0.0004285664043\dots
\]
\end{theorem}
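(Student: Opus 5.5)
The plan is inclusion--exclusion over the four events $F_i = \{p - c_i \text{ not squarefree}\}$, $c_i \in \{1,2,4,8\}$, with each intersection term supplied by Theorem~\ref{thm:sf}. A prime admits a representation with $a \le 3$ exactly when it lies outside $F_1 \cap F_2 \cap F_3 \cap F_4$. By Lemma~\ref{lem:forms}, $F_i$ is the event that the corresponding form among $x, 6x-1, 2x-1, 6x-7$ fails to be squarefree. The exceptional set is therefore the set of complete failures (Definition~\ref{def:completefailure}), and its indicator expands as
\[
\prod_{i=1}^{4}\bigl(1 - \mathbf{1}[\,p-c_i \text{ squarefree}\,]\bigr)
= \sum_{T \subseteq \{1,2,3,4\}} (-1)^{|T|} \prod_{i \in T} \mathbf{1}[\,p - c_i \text{ squarefree}\,].
\]

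Next we sum this identity over primes $p \le N$ in the class and divide by $\pi(N;36,7)$. Each of the $16$ subsets $T$ then contributes $(-1)^{|T|}$ times the proportion of primes at which all forms in $T$ are squarefree. Theorem~\ref{thm:sf} says this proportion tends to $\delta(T) = e_{|T|}$, and the empty set gives $e_0 = 1$. Only finitely many terms occur, so the limits add, and the complete failures have relative density $\sum_{j=0}^{4} (-1)^j \binom{4}{j} e_j = \rho_4$. The representable primes therefore have density $1 - \rho_4$.

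The general case $1 \le r \le 4$ is the same argument with the product taken over the first $r$ admissible remainders, which are exactly $a = 0,\dots,r-1$ since $a = 4$ is the first excluded exponent. This gives $\rho_r = \sum_{j \le r} (-1)^j \binom{r}{j} e_j$. As a sanity check, $\rho_1 = 1 - e_1$ agrees with Theorem~\ref{thm:singleform}.

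The main obstacle is the numerical statement, not the density argument. The alternating sum $\sum (-1)^j\binom4j e_j$ is a fourth finite difference of the smooth function $j \mapsto e_j$, so it cancels from size $1$ down to about $10^{-5}$. To certify $\rho_4$ to absolute error $10^{-14}$ I would avoid differencing the $e_j$ directly. Instead I would write
\[
\rho_4 = \Delta^4 e_j\big|_{j=0} = \int \cdots \quad \text{or, more concretely,} \quad \rho_4 = \sum_{j}(-1)^j\binom4j \exp\Bigl(\sum_{q\ge5}\log\bigl(1 - \tfrac{j}{q(q-1)}\bigr)\Bigr).
\]
The plan is then to compute each $\log e_j$ to about $10^{-16}$ relative accuracy. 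I would evaluate the primes $q \le Q$ (say $Q = 10^7$) exactly, and treat the tail by expanding $\log(1 - j/(q(q-1)))$ in powers of $j$ and summing over primes through prime zeta values $P(s) = \sum_p p^{-s}$, which converge rapidly. The tail bound is $O(j/(Q\log Q))$, and the cancellation multiplies these errors by at most $\sum\binom4j = 16$, so an accuracy of about $10^{-16}$ in each $e_j$ yields the stated $O(10^{-14})$. The analogous computation with $r=1,2,3$ gives the displayed values of $\rho_1, \rho_2, \rho_3$.
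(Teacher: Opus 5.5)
Your proposal is correct and follows the paper's proof: inclusion--exclusion over the subsets $T$ of the four forms, each term given by Theorem~\ref{thm:sf} as $\delta(T)=e_{|T|}$, the first $r$ admissible remainders identified with $a=0,\dots,r-1$, and the numerics obtained by computing each $e_j$ to high precision as a finite Euler product plus a prime-zeta tail correction (the paper truncates at $q \le 10^6$ and works to $50$ digits). One point of presentation: you say you will avoid differencing the $e_j$ directly, but the formula you then display is exactly the direct alternating sum. That is harmless, because computing each $e_j$ to about $10^{-16}$ (which needs multiprecision arithmetic, as in the paper's $50$-digit computation) already absorbs the loss of roughly five significant digits.
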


\begin{proof}
By Lemma~\ref{lem:forms}, $p$ admits a representation with $a \le 3$ if and only if at least one of the four forms is squarefree at $x_p$. By inclusion--exclusion over subsets $T$ of the forms,
\[
\text{density}(\text{at least one squarefree}) = \sum_{\emptyset \neq T} (-1)^{|T|-1} \delta(T)
= \sum_{j=1}^{4} (-1)^{j-1} \binom{4}{j} e_j ,
\]
using Theorem~\ref{thm:sf} and the fact that $\delta(T)$ depends only on $|T|$ (Lemma~\ref{lem:local}: the local factors are $1 - |T|/(q(q-1))$ for every $q \ge 5$ regardless of which forms lie in $T$, by distinctness of the forbidden classes). The complement has density $\rho_4 = \sum_{j=0}^4 (-1)^j \binom{4}{j} e_j$. The same computation with the first $r$ forms gives $\rho_r$.

A remark on the numerics is in order, since the alternating sum defining $\rho_4$ cancels terms of size $O(1)$ down to $2 \times 10^{-5}$ --- a loss of roughly five significant digits. The Euler products $e_j$ were therefore evaluated with $50$-digit working precision, as products over primes $q \le 10^6$ multiplied by tail corrections $\exp\bigl(\sum_{q > 10^6} \log(1 - j/(q(q-1)))\bigr)$ computed via prime zeta functions; the check $\prod_{q\ge5}(1 - q^{-2}) = 9/\pi^2$ reproduces $16$ digits. All digits displayed here and above are correct, and the stated error bound on $\rho_4$ is conservative.
\end{proof}

\begin{corollary}[Infinitude of complete failures]\label{cor:infinitude}
The number of complete failures $p \le N$ is asymptotic to $\rho_4 \cdot \pi(N; 36, 7)$ with $\rho_4 = 1.8637317202\times 10^{-5} > 0$. In particular there are infinitely many primes $p \equiv 7 \pmod{36}$ admitting no representation $p = 2^a + m$ with $m$ squarefree and $a \le 3$; the first eleven are listed in Table~\ref{tab:failures}.
\end{corollary}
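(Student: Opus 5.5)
The corollary follows from Theorem~\ref{thm:conditional} by taking complements, together with the positivity of $\rho_4$ and Dirichlet's theorem. By Definition~\ref{def:completefailure} and Lemma~\ref{lem:forms}, $p$ is a complete failure exactly when $p$ admits no representation with $a \in \{0,1,2,3\}$. So the counting function of complete failures up to $N$ equals $\pi(N;36,7)$ minus the count of representable primes.

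The first step is to state the content of Theorem~\ref{thm:conditional} quantitatively. Its proof, through Theorem~\ref{thm:sf}, gives for each nonempty $T$ an asymptotic $\#\{p\le N : \text{all forms in } T \text{ squarefree}\} = \delta(T)\,\pi(N;36,7) + o(N/\log N)$. Inclusion--exclusion over the finitely many $T$ is a finite linear combination. It therefore yields
\[
\#\{p \le N \text{ complete failure}\} = \rho_4\, \pi(N;36,7) + o(N/\log N),
\]
where the empty set contributes $e_0\,\pi(N;36,7)$.

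The second step converts the error into the relative form $o(\pi(N;36,7))$. Here we use $\pi(N;36,7)\sim \pi(N)/12 \asymp N/\log N$, which follows from the prime number theorem in progressions. Combined with the first step, this gives the asymptotic $\sim \rho_4\,\pi(N;36,7)$, provided $\rho_4 > 0$.

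The only genuine point is that positivity. One cannot read it off the alternating sum structurally: it is a numerical cancellation from $O(1)$ terms down to about $2\times10^{-5}$. I would rely on the certified evaluation in the proof of Theorem~\ref{thm:conditional}, namely $\rho_4 = 1.8637317202\times 10^{-5} + O(10^{-14})$, which is bounded away from $0$.

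As a structural cross-check, I would also note that $\rho_4$ is the density of a set containing an explicit infinite subfamily. Fix distinct primes $q_1,\dots,q_4 \ge 5$, for example $5,7,11,13$. By the CRT (as in Lemma~\ref{lem:local}), the conditions $q_i^2 \mid p - c_i$ define one admissible class modulo $36\prod q_i^2$. Dirichlet's theorem then gives infinitely many primes in that class, all of which are complete failures. This independently proves the ``in particular'' infinitude clause without any numerics.

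Finally, positive relative density implies infinitely many elements, and the listing of the first eleven is deferred to the computation in Table~\ref{tab:failures}.
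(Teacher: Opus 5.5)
Your main line is the paper's proof. The paper simply observes that complete failures are exactly the primes of the class with no representation for $a \le 3$, so Theorem~\ref{thm:conditional} gives their relative density $\rho_4$ directly. Your detour through the $o(N/\log N)$ error terms inside the proof of Theorem~\ref{thm:sf} is harmless but unnecessary: a relative density $1-\rho_4$ for the representable primes already means that the failure count divided by $\pi(N;36,7)$ tends to $\rho_4$.

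Your CRT--Dirichlet cross-check is a genuinely different and far more elementary route to the infinitude clause; it is the case $L=3$ of Proposition~\ref{prop:unbounded}. It also buys more than you claim. By the prime number theorem in progressions, the class you construct has relative density $1/\prod_i \varphi(q_i^2)$ within $p \equiv 7 \pmod{36}$. With $(q_1,\dots,q_4)=(5,7,11,13)$ this gives $\rho_4 \ge 1/(20\cdot 42\cdot 110\cdot 156) > 0$ with no numerics at all.

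So the positivity of $\rho_4$, which you call the only genuine point, does not depend on the certified evaluation. The numerics are needed only for the specific value $1.8637\ldots\times 10^{-5}$. The paper's route, by contrast, yields that exact asymptotic constant, which the elementary construction alone cannot.
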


\begin{proof}
Immediate from Theorem~\ref{thm:conditional}, since a complete failure is precisely a prime of the class admitting no such representation with $a \le 3$ (Definition~\ref{def:completefailure}), and $\pi(N; 36, 7) \to \infty$.
\end{proof}

\begin{remark}[Density, not universality]\label{rem:density}
Theorem~\ref{thm:conditional} asserts that the exceptional set is \emph{sparse}, not that it is empty, and the distinction is genuine: complete failures exist, the smallest being $p = 4{,}851{,}151$ (with $x = 808{,}525$ and obstructing squares $5^2 \mid x$, $409^2 \mid 6x-1$, $7^2 \mid 2x-1$, $53^2 \mid 6x-7$), and by Corollary~\ref{cor:infinitude} there are infinitely many. Section~\ref{sec:computation} exhibits all eleven below $10^8$.
\end{remark}

Over integer arguments, where the local densities are $1/q^2$, no hypothesis is needed at all: strong separation collapses the problem into linear pieces, and the whole analysis is unconditional.

\begin{theorem}[Integer arguments, unconditional]\label{thm:integer}
For each subset $T$ of the four forms, the set of integers $x \equiv 1 \pmod 6$, $x \le X$, with all forms in $T$ squarefree has density (relative to such $x$) equal to $\prod_{q \ge 5}(1 - |T|/q^2)$. Consequently, the set of $x \equiv 1 \pmod 6$ at which all four forms are non-squarefree has density
\[
\rho_4^{\mathbb{Z}} = \sum_{j=0}^{4} (-1)^j \binom{4}{j}\, d_j = 1.2018874348 \times 10^{-5}, \qquad d_j := \prod_{q \ge 5}\left(1 - \frac{j}{q^2}\right),
\]
where
\[
d_1 = \tfrac{9}{\pi^2} = 0.9118906527810\ldots, \quad d_2 = 0.8296305401295\ldots,
\]
\[
d_3 = 0.7529218854349\ldots, \quad d_4 = 0.6814789309610\ldots
\]
\end{theorem}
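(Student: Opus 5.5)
The plan is to repeat the proof of Theorem~\ref{thm:sf} with integer arguments in place of prime arguments. This makes things easier: counting lattice points replaces Siegel--Walfisz, and the tails can be bounded pointwise by linear counts. The density of complete failures then follows by inclusion--exclusion, exactly as in Theorem~\ref{thm:conditional}.

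Fix $T$ with $t=|T|$ and let $f_1,\dots,f_t$ be its forms. Over $x\equiv1\pmod 6$, Lemma~\ref{lem:forms} shows that no square of $2$ or $3$ divides any of the forms. Hence
\[
\mathbf 1[\text{all } f_i(x) \text{ squarefree}]=\sum_{d_1,\dots,d_t}\mu(d_1)\cdots\mu(d_t)\,\mathbf 1[d_i^2\mid f_i(x)\ \forall i],
\]
where each $d_i$ is coprime to $6$.

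I will truncate at $z=\log X$ and write $\mathbf 1[f_i \text{ squarefree}]=M_i+R_i$, as in Theorem~\ref{thm:sf}.

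\emph{Main term.} Tuples with $\gcd(d_i,d_j)>1$ vanish by Corollary~\ref{cor:separation}. For $q=7$ no special case is needed here, because the corollary already covers it. For a pairwise coprime tuple, each condition $d_i^2\mid f_i(x)$ fixes one class of $x$ modulo $d_i^2$, since the leading coefficients lie in $\{1,2,6\}$ and are invertible. By the CRT, together with $x\equiv1\pmod 6$, the tuple fixes one class modulo $6\prod d_i^2$. That class contains $X/(6\prod d_i^2)+O(1)$ integers up to $X$. Summing the coefficients gives the Euler product $\prod_{q\ge5}(1-t/q^2)$, because at each $q$ either no $d_i$ is divisible by $q$ or exactly one is. The truncation error is $O(z^{-1})$, and the $O(1)$ errors contribute $O(z^{t})=o(X)$.

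\emph{Cross terms.} I will bound these with the lattice and fourth-moment machinery of Lemmas~\ref{lem:lattice} and~\ref{lem:moments}. Those lemmas transfer verbatim to linear forms $f_i(x)$ in place of $n-c$: the count of $x\le X$ with $L^2\mid f_i(x)$ is at most $X/L^2+1$ when $(L,6)=1$, and is $0$ when $L^2>f_i(X)\ll X$. Hölder then gives cross terms of size $\ll X z^{-1/8}+X^{15/16}=o(X)$. An alternative, even simpler route is available: the event ``some $R_i\ne0$'' has measure at most $\sum_i\sum_{q>z}(X/q^2+1)$ over primes $q\le\sqrt{6X}$. To use it I would instead write the indicator as $\prod_i\mathbf 1[\text{no } q\le z \text{ with } q^2\mid f_i]$ minus an error of that size, and the product expands over $z$-smooth pairwise coprime moduli. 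I expect this bookkeeping step, choosing the truncation so that both the $O(1)$ errors from the $\prod_{q\le z}$ expansion and the tail stay $o(X)$, to be the only delicate point. Taking $z=\tfrac12\log X$ makes the modulus $6\prod_{q\le z}q^2\le X^{1-\epsilon}$, so the $2^{\pi(z)\,t}$ residue errors are $o(X)$.

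Finally, inclusion--exclusion over $T$ gives the density of $x$ at which all four forms fail as $\sum_j(-1)^j\binom4j d_j$, since the density for $T$ depends only on $|T|$. The identity $d_1=\prod_{q\ge5}(1-q^{-2})=(6/\pi^2)/((3/4)(8/9))=9/\pi^2$ is immediate. The remaining constants $d_2,d_3,d_4$ and $\rho_4^{\mathbb Z}$ are numerical evaluations, done with the same high-precision Euler-product procedure described in the proof of Theorem~\ref{thm:conditional}.
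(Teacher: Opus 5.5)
Your proof is correct, but your primary route is not the paper's. The paper's proof is essentially the ``alternative, even simpler route'' you sketch at the end of your cross-term paragraph.

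The paper sieves only by primes $5\le q\le z$. By Corollary~\ref{cor:separation}, exactly $|T|$ classes modulo $q^2$ are forbidden at each such $q$, so the CRT gives $A_z(X)=\frac X6\prod_{5\le q\le z}(1-|T|/q^2)+O\bigl(\prod_{5\le q\le z}q^2\bigr)$. The $x$ obstructed only by some $q>z$ are then removed pointwise by the union bound $\sum_{f\in T}\sum_{q>z}(X/q^2+1)\ll X/z+\sqrt X$, with $z=\frac14\log X$. No M\"obius tails, H\"older, or moment estimates appear. Over integers the large-$q$ tail is already $O(X/z)=o(X)$ pointwise. The mean-value device of Theorem~\ref{thm:sf} is needed over primes, where the trivial tail bound $N/z$ must beat $N/\log N$; here it is valid but unnecessary.

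What your main route buys in exchange is uniformity with the prime-argument proof and a far more generous truncation. Since only $O(z^t)$ residue errors arise, you could take $z$ a small power of $X$ and get a power-saving error term. The paper's crude error $O(e^{2\vartheta(z)})$ confines $z$ to a small multiple of $\log X$ and gives only $O(X/\log X)$.

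There is one slip in your alternative route. With $z=\frac12\log X$, the modulus $\prod_{q\le z}q^2=e^{2\vartheta(z)}$ is $X^{1+o(1)}$ by the prime number theorem, not at most $X^{1-\epsilon}$. This is harmless, because your actual justification never uses the size of the modulus: there are only $2^{t\pi(z)}=X^{o(1)}$ residue errors, each $O(1)$. But if you bounded the error by the modulus, as the paper does, you would need a smaller constant such as the paper's $\frac14$.
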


\begin{proof}
Fix $T$ and a parameter $z \ge 7$, and let $A_z(X)$ count $x \le X$, $x \equiv 1 \pmod 6$, such that no prime $q \in [5, z]$ has $q^2 \mid f(x)$ for some $f \in T$. For a single prime $q \ge 5$, the condition $q^2 \mid f(x)$ is a single residue class for $x$ modulo $q^2$ (the leading coefficient of each form is invertible modulo $q$), and by Corollary~\ref{cor:separation} the classes attached to distinct forms in $T$ are \emph{distinct} --- the only delicacy is $q = 7$, where $7$ may divide both $x$ and $6x - 7$, but squarefreeness concerns $49$, and the two forbidden classes $x \equiv 0$ and $x \equiv 42 \pmod{49}$ are distinct by the final clause of Theorem~\ref{thm:coprime}. Hence exactly $|T|$ classes modulo $q^2$ are forbidden at each $q \in [5, z]$, and by the Chinese Remainder Theorem,
\[
A_z(X) = \frac{X}{6} \prod_{5 \le q \le z} \left(1 - \frac{|T|}{q^2}\right) + O\!\left(\prod_{5 \le q \le z} q^2\right).
\]
The count $N_T(X)$ of $x$ with all forms in $T$ squarefree differs from $A_z(X)$ only by $x$ suffering an obstruction $q^2 \mid f(x)$ with $q > z$; since $f(x) \le 6X + O(1)$, there are at most
$\sum_{f \in T} \sum_{q > z} \left( X/q^2 + 1 \right) \ll X/z + \sqrt{X}$
such $x$. Choose $z = \tfrac14 \log X$: then the accumulated error above is $\prod_{5 \le q \le z} q^2 \le e^{2\vartheta(z)} \le X^{3/5}$ for large $X$ by Chebyshev's bound $\vartheta(z) \ll z$, while the tail contributes $O(X/\log X)$; hence $N_T(X) = \tfrac{X}{6}\prod_{q \ge 5}(1 - |T|/q^2) + o(X)$, which is the first claim. (The cutoff must not be taken larger: for $z$ of order $\log X$ with a large constant, the product $e^{2\vartheta(z)}$ would overwhelm the main term.) Inclusion--exclusion over $T$ as in Theorem~\ref{thm:conditional} gives the second. The constants $d_j$ and $\rho_4^{\mathbb{Z}}$ were computed exactly as in Theorem~\ref{thm:conditional}, with $d_1 = 9/\pi^2$ recovered to $16$ digits as a check.
\end{proof}

\begin{remark}[Where $abc$ would enter]\label{rem:abcrole}
Theorem~\ref{thm:integer} uses nothing beyond Corollary~\ref{cor:separation} and elementary sieving, even though $F_T = \prod_{f \in T} f$ has degree up to $4$: separation collapses the square-divisor problem for the quartic into independent \emph{linear} problems, where the tail $\sum_{q > z}(X/q^2 + 1)$ is trivial to control. There is no genuinely quartic behaviour, hence no range of large $q$ in which the elementary method fails. We note that separation is a convenience rather than a necessity here: for any product of distinct linear forms, all but finitely many $q$ have $q^2$ dividing at most one factor, so the same elementary argument applies after excluding the divisors of the pairwise resultants.
\end{remark}

\begin{remark}
The two constants differ by more than 50\%: $\rho_4^{\mathbb{Z}} \approx 1.20 \times 10^{-5}$ versus $\rho_4 \approx 1.86 \times 10^{-5}$. Restricting to prime arguments makes complete failures \emph{more} likely, because the conditions $q^2 \mid p - 2^a$ concentrate on the reduced residue classes. The data of Section~\ref{sec:computation} decisively favor the prime-corrected constant. Both are far smaller than the naive independence estimate $(1 - e_1)^4 \approx 1.10 \times 10^{-4}$ (or $(1 - d_1)^4 \approx 6.03 \times 10^{-5}$ with $d_1 = \prod_{q \ge 5}(1 - q^{-2}) = 9/\pi^2$), which ignores the mutual exclusivity of Corollary~\ref{cor:separation}: a single small prime such as $q = 5$ can account for only one of the four failures, and this negative correlation suppresses the joint event by a factor of about six.
\end{remark}

Finally, we record what can and cannot be said about the full conjecture, with all exponents available. The two statements at issue, Conjecture~\ref{conj:class} and its cofiniteness form Conjecture~\ref{conj:cofinite}, are stated together in Section~\ref{sec:heuristic}; neither is within reach of our methods, and the obstruction is the same in both cases.

\begin{remark}[How far the density method reaches]\label{rem:cofinite}
Theorem~\ref{thm:sf} in the extended form of Remark~\ref{rem:sfscope} gives, for each \emph{fixed} $r \le 16$, the exact relative density $\rho_r$ of primes failing all of the first $r$ admissible remainders, unconditionally. Taking $r = 16$ (admissible exponents $a \in \{0,\dots,18\} \setminus \{4, 10, 16\}$, a window of width $18 < 20$) yields an unconditional bound on the exceptional set of Conjecture~\ref{conj:cofinite}: it has relative density at most
\[
\rho_{16} = \sum_{j=0}^{16} (-1)^j \binom{16}{j} e_j = 4.2809 \times 10^{-27}.
\]
The cancellation here is far more severe than in Theorem~\ref{thm:conditional}: the largest term is $\binom{16}{8}e_8 \approx 4.8 \times 10^{3}$ against an answer of order $10^{-27}$, so roughly thirty significant digits are lost. This value was computed with $60$-digit working precision, and it is stable both under an increase of the precision to $100$ digits and under an increase of the Euler-product truncation from $10^6$ to $2.5 \times 10^6$.

One would like to let $r$ grow and conclude that the exceptional set has relative density zero. We have not established this, but we do not think the obstruction is where Remark~\ref{rem:sfscope} might suggest. The restriction to windows of width less than $20$ there is an artefact of how the local factors were stated, not of the method: for an arbitrary finite set $A$ of admissible exponents, the conditions $q^2 \mid p - 2^a$ for $a \in A$ single out the residues $2^a \bmod q^2$, and two exponents give the same condition precisely when they agree modulo $\operatorname{ord}_{q^2}(2)$. Coincident conditions merge consistently under the Chinese Remainder Theorem rather than conflicting, so the argument of Theorem~\ref{thm:sf} goes through with local factor
\[
1 - \frac{\#\{\,2^a \bmod q^2 : a \in A\,\}}{\varphi(q^2)},
\]
which agrees with $1 - |A|/\varphi(q^2)$ for all but the finitely many $q$ with $\operatorname{ord}_{q^2}(2) \le \max A$. Writing $\rho_A$ for the resulting density of primes failing every $a \in A$, the exceptional set of Conjecture~\ref{conj:cofinite} has relative density at most $\rho_A$ for every such $A$, and density zero would follow from $\rho_A \to 0$ as $A$ exhausts the admissible exponents. We have not proved that limit. It is not merely a matter of bookkeeping: for a given $p$, the exponents $a$ that are obstructed by a fixed $q$ form a congruence class modulo $\operatorname{ord}_{q^2}(2)$, so $\rho_A \to 0$ amounts to the assertion that the admissible exponents admit no covering by such classes, a question of covering-system type. Even density zero would then be a very long way from finiteness --- the squares have density zero --- and finiteness would require counting bounds uniform in $|A|$, which we do not have. We therefore state cofiniteness as a conjecture.
\end{remark}

\section{Computational Evidence}\label{sec:computation}

\subsection{Method}

We verified Conjecture~\ref{conj:onepower} for all primes $p \equiv 7 \pmod{36}$ up to $N = 10^8$ by direct sieving, in three stages: (i) a squarefree sieve on $[1, 10^8]$, marking multiples of $q^2$ for all primes $q \le 10^4$; (ii) a prime sieve extracting the $\pi(10^8; 36, 7) = 480{,}164$ primes in the class; (iii) for each such $p$, table lookups deciding squarefreeness of $p - 2^a$ for increasing admissible $a$. All arithmetic is exact 64-bit integer arithmetic; the obstructing prime squares reported below were recomputed independently by trial division as a consistency check, and the previously known failure $p = 4{,}851{,}151$ (with its obstruction quadruple $(5, 409, 7, 53)$) was reproduced. The full computation takes under one minute in vectorized Python (NumPy) on commodity hardware.

\subsection{Results}

\begin{theorem}[Computational verification]\label{thm:computational}
Every prime $p \equiv 7 \pmod{36}$ with $p < 10^8$ can be written as $p = 2^a + m$ with $m$ squarefree and $a \in \{0, 1, 2, 3, 5\}$. Exactly eleven such primes are complete failures (no admissible $a \le 3$ works); each of the eleven is resolved at $a = 5$, i.e., $p - 32 = 6x - 31$ is squarefree. The exponent $a = 4$ is never usable (Lemma~\ref{lem:excluded}).
\end{theorem}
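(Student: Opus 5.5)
This is a finite verification, so the proof is a certified computation built on the reductions already established. The plan is to reduce squarefreeness of every candidate remainder to table lookups over a precomputed sieve, argue that the sieve is exact on the relevant range, and then run the exponent-by-exponent check.

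First I would fix the data structures. Every remainder $p - 2^a$ with $p < 10^8$ lies in $[1, 10^8)$. A positive integer $m < 10^8$ is non-squarefree iff $q^2 \mid m$ for some prime $q \le \sqrt{m} < 10^4$. Hence marking all multiples of $q^2$ for primes $q \le 10^4$ gives an \emph{exact} squarefree indicator on $[1,10^8]$, and no tail of large primes needs handling. A separate sieve of Eratosthenes lists the $480{,}164$ primes $p \equiv 7 \pmod{36}$ below $10^8$.

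For each such $p$, I would test the admissible exponents in increasing order, $a = 0, 1, 2, 3, 5, 6, \dots$, skipping $a \equiv 4 \pmod 6$ by Lemma~\ref{lem:excluded}, and record the first $a$ with $p - 2^a$ squarefree. Testing $a \le 3$ is equivalent, by Lemma~\ref{lem:forms}, to asking whether one of $x, 6x-1, 2x-1, 6x-7$ is squarefree. The primes where all four fail are, by Definition~\ref{def:completefailure}, exactly the complete failures; the computation outputs eleven of them. For these eleven I would check $p - 32 = 6x - 31$ directly and confirm it is squarefree. This gives the resolution with $a \in \{0,1,2,3,5\}$ for every prime in range. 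The final clause, that $a = 4$ is never usable, is just Lemma~\ref{lem:excluded} ($9 \mid p - 16$) and needs no computation.

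The main obstacle is not mathematical but one of certifying correctness. To guard the count of exactly eleven, I would add independent checks:
\begin{itemize}
\item recompute, by trial division, an explicit obstructing square $q_i^2$ for each of the four forms at each of the eleven failures, and confirm the $q_i$ are distinct primes $\ge 5$ as Proposition~\ref{prop:poly} requires;
\item confirm by full trial-division factorization that $6x-31$ is squarefree for each of the eleven;
\item reproduce the known failure $4{,}851{,}151$ with obstruction quadruple $(5,409,7,53)$;
\item confirm the class count against $\pi(10^8;36,7)$.
\end{itemize}
Overflow cannot occur, since all values are below $10^8$ and 64-bit integer arithmetic is exact there. As a plausibility check, the number of complete failures should also be compared with the prediction $\rho_4\,\pi(10^8;36,7) \approx 8.95$.
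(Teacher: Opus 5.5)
Your proposal matches the paper's method essentially step for step. Both use an exact squarefree sieve on $[1,10^8]$ with prime squares $q^2$ for $q \le 10^4$, a prime sieve for the class, lookups of $p-2^a$ over increasing admissible $a$, and independent trial-division checks, including reproduction of $4{,}851{,}151$ with its quadruple $(5,409,7,53)$. Your remark that $q \le \sqrt{m} < 10^4$ makes the sieve exact is a justification the paper leaves implicit.
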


Table~\ref{tab:decades} shows the growth of the class and of its failures by decade, together with the expected counts $\rho_1 \cdot \pi(N; 36, 7)$ and $\rho_4 \cdot \pi(N; 36, 7)$ from Theorem~\ref{thm:conditional}.

\begin{table}[ht]
\centering
\begin{tabular}{rrrrrr}
\toprule
$N$ & $\pi(N;36,7)$ & primary & $\rho_1 \pi(N;36,7)$ & complete & $\rho_4 \pi(N;36,7)$ \\
\midrule
$10^4$ & $106$    & $11$     & $10.9$     & $0$  & $0.002$ \\
$10^5$ & $804$    & $87$     & $82.4$     & $0$  & $0.015$ \\
$10^6$ & $6{,}549$   & $674$    & $671.3$    & $0$  & $0.122$ \\
$10^7$ & $55{,}397$  & $5{,}633$   & $5{,}678.5$   & $1$  & $1.03$ \\
$10^8$ & $480{,}164$ & $49{,}265$  & $49{,}219.7$  & $11$ & $8.95$ \\
\bottomrule
\end{tabular}

\caption{Primary failures ($x$ non-squarefree) and complete failures (all four forms non-squarefree) among primes $p \equiv 7 \pmod{36}$, $p \le N$, with model predictions.}
\label{tab:decades}
\end{table}

The individual forms fail at nearly identical rates, as the model predicts: among the $480{,}164$ primes of the class below $10^8$, the counts of non-squarefree values are $49{,}265$ for $x$, $49{,}101$ for $6x-1$, $49{,}131$ for $2x-1$, and $49{,}171$ for $6x-7$, each within $0.3\%$ of the predicted $\rho_1 \cdot 480{,}164 = 49{,}219.7$.

Table~\ref{tab:failures} lists the eleven complete failures with their obstruction quadruples; in every row the four obstructing primes are distinct and at least $5$, as Proposition~\ref{prop:poly} requires.

\begin{table}[ht]
\centering
\begin{tabular}{rrrrrrr}
\toprule
$p$ & $x = (p-1)/6$ & $q_1$ & $q_2$ & $q_3$ & $q_4$ & $\min_i q_i$ \\
\midrule
$4{,}851{,}151$  & $808{,}525$    & $5$   & $409$ & $7$   & $53$ & $5$ \\
$14{,}627{,}383$ & $2{,}437{,}897$  & $7$   & $31$  & $23$  & $5$  & $5$ \\
$17{,}026{,}927$ & $2{,}837{,}821$  & $19$  & $5$   & $23$  & $13$ & $5$ \\
$19{,}883{,}527$ & $3{,}313{,}921$  & $13$  & $5$   & $23$  & $19$ & $5$ \\
$50{,}671{,}051$ & $8{,}445{,}175$  & $5$   & $11$  & $7$   & $19$ & $5$ \\
$63{,}716{,}551$ & $10{,}619{,}425$ & $5$   & $13$  & $53$  & $11$ & $5$ \\
$64{,}734{,}883$ & $10{,}789{,}147$ & $31$  & $19$  & $11$  & $5$  & $5$ \\
$72{,}319{,}651$ & $12{,}053{,}275$ & $5$   & $17$  & $109$ & $11$ & $5$ \\
$95{,}075{,}683$ & $15{,}845{,}947$ & $13$  & $37$  & $41$  & $5$  & $5$ \\
$96{,}041{,}527$ & $16{,}006{,}921$ & $149$ & $5$   & $19$  & $7$  & $5$ \\
$98{,}555{,}227$ & $16{,}425{,}871$ & $11$  & $5$   & $13$  & $7$  & $5$ \\
\bottomrule
\end{tabular}

\caption{All complete failures $p < 10^8$: the four candidate remainders $p-1, p-2, p-4, p-8$ are non-squarefree, obstructed by $q_1^2 \mid x$, $q_2^2 \mid 6x{-}1$, $q_3^2 \mid 2x{-}1$, $q_4^2 \mid 6x{-}7$. In each case $p - 32$ is squarefree.}
\label{tab:failures}
\end{table}

The number of complete failures is itself a check on Theorem~\ref{thm:conditional}: with $\lambda = \rho_4 \cdot 480{,}164 = 8.95$, observing eleven is unremarkable, since $\mathbb{P}(X \ge 11) \approx 0.29$ for $X \sim \mathrm{Poisson}(\lambda)$.

Notably, $\min_i q_i = 5$ in every row: each of the eleven complete failures employs the most probable obstructing prime somewhere among its four slots. This is consistent with Table~\ref{tab:obstructors} below, where $q = 5$ accounts for $49\%$ of all primary failures, and it makes the connection between the two tables visible: complete failures are assembled from the same obstruction statistics that govern single forms.

Table~\ref{tab:exponents} gives the distribution of the smallest usable exponent, compared with the model: the probability that the smallest admissible exponent is the $(r+1)$-st admissible value is $\rho_r - \rho_{r+1}$ (with $\rho_0 = 1$ and $\rho_5$ given by the five-form case of Theorem~\ref{thm:sf}, via Proposition~\ref{prop:extended} and Remark~\ref{rem:sfscope}). Every entry in the ``expected'' column is therefore a theorem, not a heuristic.

\begin{table}[ht]
\centering
\begin{tabular}{crr}
\toprule
smallest $a$ & observed & expected \\
\midrule
$0$ & $430{,}899$ & $430{,}944.3$ \\
$1$ & $45{,}545$  & $45{,}535.6$  \\
$2$ & $3{,}527$   & $3{,}478.3$   \\
$3$ & $182$    & $196.8$    \\
$4$ & $0$      & $0$        \\
$5$ & $11$     & $8.6$      \\
$>5$ & $0$     & $0.3$      \\
\bottomrule
\end{tabular}

\caption{Distribution of the smallest exponent $a$ with $p - 2^a$ squarefree, over the $480{,}164$ primes $p \equiv 7 \pmod{36}$, $p < 10^8$. The entry $a = 4$ vanishes identically by Lemma~\ref{lem:excluded}.}
\label{tab:exponents}
\end{table}

Finally, Table~\ref{tab:obstructors} examines \emph{which} prime obstructs the primary form: for each of the $49{,}265$ primes with $x$ non-squarefree, we record the smallest prime $q$ with $q^2 \mid x$. The prime-corrected model predicts that $q$ is the smallest obstructor with probability
\[
\frac{1}{q(q-1)} \prod_{5 \le r < q} \left(1 - \frac{1}{r(r-1)}\right);
\]
the naive integer model replaces $q(q-1)$ by $q^2$. The data discriminate sharply between the two.

\begin{table}[ht]
\centering
\begin{tabular}{crrr}
\toprule
$q$ & observed & prime-corrected & naive \\
\midrule
$5$  & $24{,}039$ & $24{,}008.2$ & $19{,}206.6$ \\
$7$  & $10{,}835$ & $10{,}860.9$ & $9{,}407.3$  \\
$11$ & $4{,}025$  & $4{,}048.1$  & $3{,}731.8$  \\
$13$ & $2{,}852$  & $2{,}828.5$  & $2{,}649.8$  \\
$17$ & $1{,}629$  & $1{,}611.8$  & $1{,}540.4$  \\
$19$ & $1{,}266$  & $1{,}277.2$  & $1{,}228.9$  \\
$23$ & $874$   & $860.7$   & $836.3$   \\
$29$ & $543$   & $535.3$   & $525.0$   \\
$31$ & $484$   & $466.8$   & $458.9$   \\
$37$ & $332$   & $325.6$   & $321.8$   \\
\bottomrule
\end{tabular}

\caption{Smallest obstructing prime $q$ (i.e., smallest $q$ with $q^2 \mid x$) among the $49{,}265$ primary failures below $10^8$: observed counts against the prime-corrected model ($1/q(q-1)$ local densities) and the naive model ($1/q^2$).}
\label{tab:obstructors}
\end{table}

The overall primary failure rate makes the comparison starkest, and it can be made precise. The observed rate is $49{,}265/480{,}164 = 0.102600$; with $N = 480{,}164$ trials, the binomial standard deviation of the rate is $\sqrt{\rho_1(1-\rho_1)/N} \approx 4.38 \times 10^{-4}$. The prime-corrected prediction $\rho_1 = 0.102506$ lies $0.22\sigma$ from the observation; the naive prediction $1 - 9/\pi^2 = 0.088109$ lies $33.1\sigma$ away. For Table~\ref{tab:obstructors} as a whole, the $\chi^2$ statistic over the ten displayed categories is $1.79$ for the prime-corrected model against $1481$ for the naive model. The expected counts here are not normalized to the observed number of primary failures; they are the model applied to all $480{,}164$ primes of the class, and they sum to $46{,}823$ against the $46{,}879$ observed in those categories. Extending the ten categories to the full partition of the class, by adjoining a category for obstruction by some $q \ge 41$ and a category for $x$ squarefree, gives $\chi^2 = 1.84$ on $11$ degrees of freedom. A value that far below its expectation corresponds to $p \approx 0.999$, so the agreement is if anything slightly better than chance alone would produce. The naive model is excluded; the prime-corrected model fits at every observable depth.

\section{The Heuristic Model and Conjectures}\label{sec:heuristic}

The model underlying Theorem~\ref{thm:conditional} and the tables above is the following. For a random prime $p \equiv 7 \pmod{36}$ and each prime $q \ge 5$, the events $q^2 \mid p - 2^a$ over admissible exponents $a$ in a window of width less than $20$ are mutually exclusive (Proposition~\ref{prop:extended} and Remark~\ref{rem:wieferich}), each of relative density $1/\varphi(q^2) = 1/(q(q-1))$ (Lemma~\ref{lem:local}); events at distinct primes $q$ are asymptotically independent, which for fixed windows is the content of Theorem~\ref{thm:sf}. This parallels the heuristic that Hercher \cite{hercher} develops for the odd-integer conjecture, with two differences: our local densities carry the prime-argument correction $q^2 \to q(q-1)$, and in our residue class the modulus-$9$ obstruction is absorbed structurally by Lemma~\ref{lem:excluded} (exponents $a \equiv 4 \bmod 6$ are excluded outright) rather than entering the local factors, so all remaining local computations run over $q \ge 5$ only.

For every \emph{fixed} exponent window, this model is now a theorem: Theorem~\ref{thm:sf} and Remark~\ref{rem:sfscope} justify the local factors, and Theorem~\ref{thm:conditional} and Corollary~\ref{cor:infinitude} convert them into densities --- the asymptotic count $\rho_4 \cdot \pi(N; 36,7)$ of complete failures is proved, not conjectured, and the agreement in Tables~\ref{tab:decades}--\ref{tab:obstructors} is a check on the theorems rather than evidence for a guess. What remains genuinely conjectural is the behavior when the window is allowed to grow with $p$, where every uniformity we would need fails (Remark~\ref{rem:cofinite}). We therefore state the two conjectures that remain, in decreasing order of strength.

\begin{conjecture}[One-power conjecture for the class]\label{conj:class}
Every prime $p \equiv 7 \pmod{36}$ can be written as $p = 2^a + m$ with $a \ge 0$, $a \not\equiv 4 \pmod 6$, and $m$ squarefree.
\end{conjecture}

We state also the weaker form, which our methods likewise do not reach.

\begin{conjecture}[Cofiniteness]\label{conj:cofinite}
All but finitely many primes $p \equiv 7 \pmod{36}$ can be written as $p = 2^a + m$ with $a \ge 0$ and $m$ squarefree.
\end{conjecture}

Conjecture~\ref{conj:class} is Conjecture~\ref{conj:onepower} for the class, sharpened by Lemma~\ref{lem:excluded}; Theorem~\ref{thm:computational} verifies it to $10^8$ with $a \le 5$, and Conjecture~\ref{conj:cofinite} is its at-most-finitely-many-exceptions form (see Remark~\ref{rem:cofinite} for why even $abc$ does not appear to deliver this). In the other direction, no bounded set of exponents can suffice permanently:

\begin{proposition}[The required exponent is unbounded]\label{prop:unbounded}
For every $L$, there exist infinitely many primes $p \equiv 7 \pmod{36}$ such that $p - 2^a$ is non-squarefree for every $a \le L$. Consequently $\limsup_p a_{\min}(p) = \infty$, where $a_{\min}(p)$ is the smallest usable exponent.
\end{proposition}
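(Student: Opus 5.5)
The plan is a Chinese Remainder Theorem construction followed by Dirichlet's theorem. Fix $L$. For each exponent $a \le L$ we need a prime square dividing $p - 2^a$. For the excluded exponents $a \equiv 4 \pmod 6$ this is automatic, since $9 \mid p - 2^a$ by Lemma~\ref{lem:excluded}. For each remaining exponent $a \in \{0,\dots,L\}$ with $a \not\equiv 4 \pmod 6$, we assign a \emph{distinct} prime $q_a \ge 5$. We then impose the congruence
\[
p \equiv 2^a \pmod{q_a^2}.
\]
To keep things clean, take the $q_a$ to be distinct primes larger than $\max(5, 2^L)$. This ensures every residue $2^a$ is a unit modulo $q_a^2$ and that nothing interacts with the modulus $36$.

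We combine these congruences with $p \equiv 7 \pmod{36}$ by the Chinese Remainder Theorem, since the moduli $36$ and $q_a^2$ are pairwise coprime. This yields a single residue class $p \equiv r \pmod{M}$, where $M = 36\prod_a q_a^2$. The class is admissible: $r \equiv 7$ is coprime to $36$, and $r \equiv 2^a$ is coprime to $q_a$ because $q_a$ is odd. By Dirichlet's theorem this class contains infinitely many primes, in fact a positive proportion $1/\varphi(M)$ of them. Each such prime with $p > 2^L$ has $q_a^2 \mid p - 2^a$ for every admissible $a \le L$, and $9 \mid p - 2^a$ for the excluded ones. Hence $p - 2^a$ is non-squarefree for every $a \le L$, including those $a$ with $2^a \ge p$ if one interprets the statement as vacuous there. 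Since $p > 2^L$, all these $m = p - 2^a$ are positive.

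The consequence $\limsup a_{\min}(p) = \infty$ follows at once: for each $L$ there are primes with $a_{\min}(p) > L$. The only point needing care is that distinct exponents use distinct primes. Windowed separation (Proposition~\ref{prop:extended}) forbids a single $q$ from serving two exponents within width $19$, so one cannot economize by reusing primes. Using distinct primes sidesteps this entirely, and the resulting CRT system is consistent. I expect no real obstacle here. The mild subtlety is only that density arguments such as Remark~\ref{rem:cofinite} are not needed; the construction gives existence directly, and via Theorem~\ref{thm:sf} even positive relative density for fixed $L$.
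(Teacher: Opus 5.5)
Your proposal is correct and is essentially the paper's proof: assign distinct primes $q_a \ge 5$ to the admissible $a \le L$, merge $p \equiv 2^a \pmod{q_a^2}$ with $p \equiv 7 \pmod{36}$ by CRT into one admissible class, apply Dirichlet's theorem, discard $p \le 2^L$, and let $9 \mid p - 2^a$ (Lemma~\ref{lem:excluded}) handle the excluded exponents. Two small remarks: the requirement $q_a > 2^L$ is unnecessary, and the positive relative density you mention follows directly from Dirichlet's theorem for the single class modulo $36\prod_a q_a^2$, without Theorem~\ref{thm:sf}, which in any case only covers windows of width less than $20$.
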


\begin{proof}
Let $A = \{a \le L : a \not\equiv 4 \pmod 6\}$ be the admissible exponents up to $L$, and choose distinct primes $q_a \ge 5$, one for each $a \in A$. The congruences
\[
p \equiv 7 \pmod{36}, \qquad p \equiv 2^a \pmod{q_a^2} \quad (a \in A)
\]
have pairwise coprime moduli, and each is a class coprime to its modulus ($2^a$ is coprime to $q_a$). By CRT they determine a single admissible class modulo $36\prod_a q_a^2$, which by Dirichlet's theorem contains infinitely many primes. Discard the finitely many such primes with $p \le 2^L$, so that $p - 2^a$ is a positive integer for every $a \le L$; infinitely many remain. For each of these, $q_a^2 \mid p - 2^a$ for every admissible $a \le L$, while $9 \mid p - 2^a$ for the inadmissible $a$; hence no $a \le L$ is usable.
\end{proof}

This is the class-restricted analogue of the observation of Hercher \cite[Remark 4]{hercher} and, like his, shows that computational verification can never terminate at a bounded exponent; compare also the Wieferich-prime connection of Granville and Soundararajan \cite{gransound}, which enters exactly when exponent windows of width $\ge 20$ are needed (Remark~\ref{rem:wieferich}).

\section{Conclusion}\label{sec:conclusion}

For primes $p \equiv 7 \pmod{36}$, the one-power Erd\H{o}s conjecture reduces to the squarefreeness of the values of a handful of linear forms in the single variable $x = (p-1)/6$, and the pairwise-coprimality theorem turns the failure analysis into a clean question about square divisors of the squarefree quartic $P(x) = x(6x-1)(2x-1)(6x-7)$. We proved the strong separation theorem, extended it to every window of admissible exponents of width less than $20$, and identified the Wieferich-type boundary at exponent gaps of exactly $20$. Separation does real work: it makes the complete-failure density over integer arguments an unconditional theorem (Theorem~\ref{thm:integer}), and it is what makes the prime-argument analysis go through --- combined with Mirsky's method for single forms (Theorem~\ref{thm:singleform}) and, for several forms at once, with a truncated M\"obius expansion whose cross terms are controlled in mean by fourth-moment estimates (Theorem~\ref{thm:sf}). Consequently the density $1 - \rho_4$ of representable primes (Theorem~\ref{thm:conditional}) and the infinitude of complete failures (Corollary~\ref{cor:infinitude}) are unconditional, and every ``expected'' entry in our tables is a proved asymptotic. Computationally, we verified the conjecture for the class to $10^8$ with exponents in $\{0,1,2,3,5\}$, determined all eleven complete failures in that range, and matched the data against a prime-corrected heuristic whose predictions are accurate at every observable depth --- the correction $1/q^2 \to 1/\varphi(q^2)$, forced by Dirichlet's theorem, is visible in the data at $33$ standard deviations. Appendix~\ref{sec:hercher} corrects the error bound in \cite[Lemma 2]{hercher}, now with a complete proof, verifying that its conclusions stand.

Several directions remain open. First, and most substantially, the gap between our density theorems and Conjecture~\ref{conj:class} is entirely a question of \emph{uniformity in the window size}: we can bound the exceptional set by $\rho_{16} < 10^{-26}$ but not by zero, because every error term in Section~\ref{sec:conditional} degrades as the number of forms grows, and past width $20$ even the local factors change shape (Remark~\ref{rem:sfscope}). A version of Theorem~\ref{thm:sf} with error terms uniform in $t$, valid for $t$ growing with $N$, would be the decisive advance; the Wieferich-type obstruction of \cite{gransound} suggests this is genuinely hard. Second, a quantitative \emph{upper} bound for complete failures via a direct sieve treatment of quadruples $(q_1q_2q_3q_4)^2 \mid P(x)$ --- rather than through inclusion--exclusion --- would likely be more robust to growing windows, and is worth attempting. Third, the feature of the class $p \equiv 7 \pmod{36}$ that the argument really uses is the behaviour at $2$ and $3$: the admissible exponents are exactly those with $a \not\equiv 4 \pmod 6$, and for these the content of $p - 2^a$ is squarefree, so that obstructions come only from $q \ge 5$. Classifying the pairs $(r,m)$ for which the analogous statement holds, and for which the admissible exponents form as convenient a set, would delimit where the method applies; the linearization itself is not the operative feature, since $p - 2^a$ is linear in $p$ in every class. Finally, extending the computation to $10^{10}$ or beyond --- where Corollary~\ref{cor:infinitude} predicts roughly $700$ complete failures below $10^{10}$ --- would test the asymptotic in a regime where the error term, not the main term, dominates our uncertainty, and searching for a prime requiring exponent $a \ge 6$ (none exists below $10^8$; the model expects the first around a few times $10^{8}$) would probe the tail of Table~\ref{tab:exponents}.

\appendix
\section{A Correction to a Lemma of Hercher}\label{sec:hercher}

While reconstructing the proofs in \cite{hercher}, we found that \cite[Lemma 2]{hercher} is incorrect as stated. We record the issue and its (harmless) consequences here, both for completeness and because Section~\ref{sec:heuristic} relies on Euler products of the same shape $\prod_{q \ge 5}(1 - j/q(q-1))$, computed by the same technique.

The lemma asserts: for real $0 < x \le \tfrac19$ and integer $2 \le k \le 20$ with $kx \le \tfrac45$,
\[
\log(1 - kx) = k \log(1-x) + \binom{k}{2} \log(1 - x^2) + e(x), \qquad |e(x)| < 18 x^3.
\]
The bound $|e(x)| < 18x^3$ cannot hold. Expanding each logarithm in Taylor series, the coefficient of $x^3$ in $e(x)$ is
\[
-\frac{k^3}{3} + \frac{k}{3} = -\frac{k^3 - k}{3},
\]
so that $e(x) = -\tfrac{k^3-k}{3}x^3 + O_k(x^4)$ as $x \to 0^+$. For $k = 20$ the cubic coefficient is $-\tfrac{8000 - 20}{3} = -2660$, so $|e(x)| \sim 2660\,x^3$ already for small $x$, exceeding $18x^3$ for every sufficiently small $x > 0$ (indeed, the claimed bound fails for all $k \ge 4$). The slip in the printed proof is that the Lagrange remainder of $\log(1 - kx)$ is bounded with an absolute constant, dropping the factor $k^3$.

\begin{lemma}[Corrected form of {\cite[Lemma 2]{hercher}}]\label{lem:herchercorrected}
For real $0 < x \le \tfrac{1}{25}$ and integer $2 \le k \le 20$,
\[
\left| \log(1 - kx) - k \log(1-x) - \binom{k}{2} \log(1 - x^2) \right| \;<\; 7637\, x^3,
\]
and the constant is sharp up to rounding: $\sup_{0 < x \le 1/25} |e(x)|/x^3 = 7636.79\ldots$, attained as $x \to \tfrac1{25}$ at $k = 20$.
\end{lemma}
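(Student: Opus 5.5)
The plan is to expand $e(x)$ as a power series and bound it term by term. For $|x|<1/k$ we have
\[
e(x) = \log(1-kx) - k\log(1-x) - \binom{k}{2}\log(1-x^2) = -\sum_{n\ge1} \frac{c_n(k)}{n} x^n,
\]
where $c_n(k) = k^n - k - \binom{k}{2}\cdot[\,n \text{ even}\,]\cdot 2$. More precisely, the $\log(1-x^2)$ term contributes $-\binom{k}{2}x^{2m}/m = -2\binom{k}{2}x^{2m}/(2m)$ at $n=2m$. The cases $n=1$ and $n=2$ cancel: $k-k=0$ and $k^2-k-k(k-1)=0$. This is the reason for the choice of the coefficients $k$ and $\binom{k}{2}$. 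For $n\ge3$, $c_n(k)\ge0$ whenever $k\ge2$, since $k^n \ge k+k(k-1)$ for $n\ge 3$. Hence $e(x) = -\sum_{n\ge3} c_n(k)x^n/n$ is negative, and $|e(x)|/x^3 = \sum_{n\ge3} c_n(k)x^{n-3}/n$. This quantity is increasing in $x$ and increasing in $k$, because each $c_n(k)$ is increasing in $k$ for $k\ge2$ (check: $k^n - k^2$ is increasing for $n\ge 3$).

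The supremum over the region is therefore attained at the corner $k=20$, $x\to 1/25$. The hypothesis $x\le 1/25$ gives $kx\le 4/5<1$, so the series converges there. The sharp constant is then simply
\[
\frac{|e(1/25)|}{(1/25)^3} = 25^3\Bigl(-\log\tfrac15 + 20\log\tfrac{24}{25} + 190\log\tfrac{624}{625}\Bigr),
\]
which I would evaluate numerically to confirm the value $7636.79\ldots$. Because the ratio is monotone and continuous and $x=1/25$ lies in the allowed range, "attained as $x\to1/25$" reads as attained at $x=1/25$. The strict inequality $<7637$ follows because the value is $7636.79<7637$.

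The main obstacle is making monotonicity in $k$ airtight, since $k$ is an integer and each coefficient must be handled separately. For odd $n$, $c_n=k^n-k$. For even $n$, $c_n=k^n-k^2$. Both are nonnegative and increasing in $k\ge2$ for $n\ge3$, and this is immediate. A secondary concern is the numerical evaluation: the three logarithms nearly cancel after scaling by $25^3=15625$, so they must be computed to about 8 significant digits. I would state the bracket value, about $0.48876$, and verify it with high precision, in the same spirit as the Euler-product numerics in Theorem~\ref{thm:conditional}. As a sanity check, the leading term $(k^3-k)/3=2660$ is consistent with this, since the tail terms at $x=1/25$ add roughly $k^4x/4\approx1600$, then about $k^5x^2/5$, and so on, up to the total.
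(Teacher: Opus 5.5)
Your proposal is correct and is essentially the paper's proof: the same series expansion with the $n=1,2$ cancellation, the same nonnegative coefficients $(k^n-k)/n$ for odd $n$ and $(k^n-k^2)/n$ for even $n$, monotonicity in both $x$ and $k$, and evaluation at the corner $(k,x)=(20,\tfrac{1}{25})$. One numerical nit: the bracket is $0.48875456\ldots$, not $0.48876$ (which would give $7636.875$), so the corner value is $25^3\cdot 0.48875456\ldots = 7636.7900\ldots$, matching the statement's $7636.79\ldots$ and about $0.21$ below $7637$ (the paper's proof prints $7636.7967\ldots$, which is off in the third decimal, harmlessly).
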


\begin{proof}
Combining the three logarithm series (all convergent for $0 < x < 1/k$, hence on our domain, where also $kx \le \tfrac45$),
\[
-e(x) \;=\; \sum_{n \ge 1} \frac{k^n - k}{n}\, x^n \;-\; \binom{k}{2} \sum_{m \ge 1} \frac{x^{2m}}{m}.
\]
The $n = 1$ term vanishes and the $n = 2$ term cancels against the $m = 1$ term, leaving
\[
-e(x) \;=\; \sum_{n \ge 3} c_n(k)\, x^n,
\qquad
c_n(k) = \begin{cases} \dfrac{k^n - k}{n}, & n \text{ odd},\\[2mm] \dfrac{k^n - k^2}{n}, & n \text{ even},\end{cases}
\]
since for even $n = 2m$ the two contributions merge into $\bigl(k^n - k - 2\binom{k}{2}\bigr)/n = (k^n - k^2)/n$. Every coefficient $c_n(k)$ is nonnegative for $k \ge 2$ and $n \ge 3$, and each is nondecreasing in $k$. Therefore $e(x) < 0$, and
\[
\frac{|e(x)|}{x^3} \;=\; \sum_{n \ge 3} c_n(k)\, x^{n-3}
\]
is a sum of nonnegative nondecreasing functions of $x$ on $(0, \tfrac1{25}]$, hence nondecreasing in $x$, and nondecreasing in $k$ as well. The supremum over $2 \le k \le 20$ and $0 < x \le \tfrac1{25}$ is thus the single value at $(k, x) = (20, \tfrac1{25})$:
\[
25^3 \left| \log\tfrac15 - 20 \log\tfrac{24}{25} - 190 \log\bigl(1 - \tfrac{1}{625}\bigr) \right| \;=\; 7636.7967\ldots \;<\; 7637. \qedhere
\]
\end{proof}

\medskip
\noindent
\textit{Impact on \cite{hercher}.} The lemma is applied in \cite[Lemma 3]{hercher} with $x = p^{-2}$ over primes $p > m > 5$; thus $x \le 49^{-1} < 25^{-1}$ in general, and Lemma~\ref{lem:herchercorrected} applies. In the computation reported in \cite{hercher} the truncation is $m = 10^7$, so in fact $x < 10^{-14}$ throughout. Propagating the corrected constant, the error factor $f$ there satisfies
\[
|\log f| \;<\; 7637 \sum_{p > m} p^{-6} \;<\; \frac{7637}{5}\, m^{-5} \;<\; 1530\, m^{-5}
\]
in place of the printed $4m^{-5}$. At $m = 10^7$ this is below $1.6 \times 10^{-32}$, utterly negligible at the 40-digit working precision of his computation. \emph{All numerical conclusions of \cite{hercher} --- in particular his Theorem~5 and Tables~1, 2, and 4 --- are unaffected.} We have communicated the correction to the author.

\end{document}